\tikzset{
    tdplot_main_coords/.style={
        x={(1cm,0cm)},
        y={(0.3cm,0.8cm)},
        z={(0cm,1cm)}
    }
}
\definecolor{darkgreen}{rgb}{0,0.6,0}
\def\LG{\textcolor{darkgreen}}
\newcommand{\such}{\, | \,}
\newcommand{\be}{\begin{equation}}
\newcommand{\ee}{\end{equation}}
\newcommand{\R}{\mathbb R}
\DeclareSymbolFont{bbold}{U}{bbold}{m}{n}
\DeclareSymbolFontAlphabet{\mathbbold}{bbold}
\newtheorem{Theorem}{Theorem}
\newtheorem{Lemma}[Theorem]{Lemma}
\newtheorem{Proposition}[Theorem]{Proposition}
\newtheorem{Problem}[Theorem]{Problem}
\newtheorem{Remark}[Theorem]{Remark}
\newtheorem{example}[Theorem]{Example}
\definecolor{darkgreen}{rgb}{0,0.6,0}
\def\LG{\textcolor{darkgreen}}
\def\TK{\textcolor{magenta}}
\title{
A   turnpike property in an  eigenvalue optimization~problem}
\author{Adam Kaminer, Thomas Kriecherbauer, Lars Gr\"une, and  Michael Margaliot\thanks{AK  and MM   are with the School of ECE, Tel Aviv University, 69978, Israel. TK and LG are with Mathematical Institut, University of Bayreuth, 95440 Bayreuth, Germany.
 The research of MM is partly supported by a  research grant from the Israeli Science Foundation~(ISF). The research of MM, TK, and LG was supported by DFG Grant No.\ 470999742.    Correspondence:   michaelm@tauex.tau.ac.il    }}
\begin{document}
\maketitle

\begin{abstract}
 We consider a constrained  eigenvalue optimization problem that arises in an important nonlinear dynamical model   for mRNA translation in the cell. We prove  that the ordered list of optimal parameters   admits a  turnpike property, namely, it includes three parts with the first and third part relatively short, and the values in the middle part are all approximately equal. 
 Turnpike properties have attracted considerable attention in econometrics  and optimal control theory, but to the best of our knowledge this is the first rigorous   
 proof of such a structure in an eigenvalue optimization problem.
\end{abstract}

\begin{IEEEkeywords}
Ribosome flow model,
symmetric tri-diagonal matrices,  eigenvalue optimization, systems biology.  
\end{IEEEkeywords}

\section{Introduction}

Eigenvalue optimization problems are important in many scientific fields 
(see, e.g.~\cite{Overton1988,BOYD199363} and the references therein) and have found  applications in 
matrix theory, mechanical vibrations,
control theory, and stability analysis. 
Here, we consider an   
eigenvalue optimization problem arising in an important nonlinear ODE model from systems biology. Our main result is a proof that the problem admits a 
turnpike property: the ordered list of optimal parameters is made of three pieces: the first and last piece are short, and the middle piece is long and the optimal parameters there are approximately equal. 

Turnpike properties received considerable attention in the context of optimal control, see \cite{FauG22} and the references therein, with roots in mathematical economy \cite{Rams28,DoSS58}. The turnpike property describes the phenomenon that optimal trajectories stay near an optimal equilibrium---the so-called turnpike equilibrium---most of the time. There are many different ways to formalize this phenomenon mathematically; we will give one of the most commonly used variants  in Eq.~\eqref{eq:turnpike} below, in order to compare it to the turnpike property we establish in the eigenvalue problem.

In optimal control, there are two main techniques to establish that the turnpike property holds: One uses a system-theoretic property called strict dissipativity, which provides an abstract energy inequality that prevents optimal solutions to stay away from the turnpike equilibrium for a long time \cite{Grue22}. The second technique uses that the turnpike equilibrium is part of an equilibrium (in an extended state space), which is hyperbolic for the dynamical system resulting from the necessary optimality conditions \cite{PorZ13,TRELAT201581}. In both cases, the turnpike equilibrium is an optimal equilibrium that is determined by solving a static optimization problem.

To the best of our knowledge,  our result in this paper is the first rigorous proof of a turnpike property in an eigenvalue optimization problem. For this problem, none of the techniques from optimal control applies directly. However, our approach in this paper bears some similarity with the second approach, as we also make use of the fact that an 
auxiliary dynamical system admits an 
hyperbolic equilibrium  point.

The remainder of this paper is organized as follows. The next section describes the eigenvalue optimization problem that we consider, and motivates it using two  applications to dynamical systems.  
Section~\ref{sec:main} 
states the  main results, and the proof is given in Section~\ref{sec:proofs}. Section~\ref{secnumerics} details several   numerical examples that demonstrate the various quantities defined  in the proofs.
The final section concludes and describes possible topics for further research.

We use standard notation. Small [capital]  letters 
are used to denote vectors [matrices]. 
$\mathbb N$ is the set of natural numbers. The non-negative orthant in~$\R^n$ is $\R^n_{\geq 0}:=\{x\in\R^n\such x_i\geq 0 \}$, and the positive orthant  is
$\R^n_{> 0}:=\{x\in\R^n\such x_i > 0 \}$. The transpose of a matrix~$A$ is~$A^\top$. For a symmetric matrix~$A\in\R^{n\times n}$, we let~$\sigma(A)$ denote the maximal eigenvalue of~$A$. We use~$1_m$ to denote the vector in~$\R^m$ with all entries one. For~$x\in\R$, $\lceil
x \rceil $ [$\lfloor x \rfloor$] is the minimal integer that is larger or equal to~$x$  [maximal integer that is smaller or equal to~$x$].

\section{Eigenvalue optimization  problem}
Consider the~$(n+2)\times (n+2)$  symmetric and tri-diagonal  matrix
\be\label{eq:B_spec}
B(\lambda):=\begin{bmatrix}
    0 & \lambda_0^{-1/2}& 0 &0 &0 &\dots& 0 &0&0\\
 \lambda_0^{-1/2}& 0 &\lambda_1^{-1/2} &0 &0&\dots& 0 &0&0\\
  0 &\lambda_1^{-1/2} &0 &\lambda_2^{-1/2}&0&\dots& 0 &0&0\\
  &&\vdots\\
   0 &0 &0 &0&0&\dots& \lambda_{n-1}^{-1/2}& 0& \lambda_{n}^{-1/2} \\
  0 &0 &0 &0&0&\dots&0& \lambda_{n}^{-1/2} &0\\
\end{bmatrix}, 
\ee
with~$\lambda_i>0$ for~$i=0,\dots,n $. 
 
Since~$B(\lambda)$ is symmetric, all its eigenvalues are real. Since it is also componentwise  
nonnegative and irreducible,  its maximal eigenvalue (or Perron root), denoted~$\sigma(B(\lambda))$, is
simple and positive~\cite[Chapter~8]{HornJohnson2013MatrixAnalysis}. We consider the following eigenvalue optimization problem. 

\begin{Problem}\label{prob:main_prob}
Find~$\lambda \in\R^{n+1}_{> 0 }$ that minimizes~$\sigma(B(\lambda))$ subject to  the constraint
\begin{align}\label{eq:constri_lam}
  \frac1{n+1}\sum_{i=0}^{n} \lambda_i  \leq 1. 
\end{align}
\end{Problem}

This  problem admits a unique solution (see below), that  we denote    by $\bar\lambda=
\begin{bmatrix}
    \bar \lambda_0&\dots&\bar\lambda_n
\end{bmatrix}^\top$. The minimal Perron root is $\bar\sigma: = \sigma(B(\bar\lambda))$.

\begin{Remark}\label{rem:uniq}
      [Existence and uniqueness of the optimal solution]
    The induced $L_2$ matrix norm of a matrix~$A$ is~$\|A\|_2=\sqrt{
    \sigma(A^\top A) }$, and if~$A$ is  symmetric  this gives~$\|A\|_2=\sigma(A )  $. Let~$e^i$, $i=1,\dots,n+2$, be the standard basis in~$\R^{n+2}$. Then
    \[
    (e^i)^\top B^\top(\lambda)B(\lambda)e^i=\begin{cases} 
    \lambda_{0}^{-1},&\text{ if } i=1,\\
    \lambda_{i-1}^{-1} +\lambda_{i-2}^{-1},&\text{ if } i=2,\dots,n+1,\\
    \lambda_{n}^{-1}&\text{ if } i=n+2. 
    \end{cases}
\]
Thus, if some~$\lambda_i\to 0$ then 
    $\sigma(B(\lambda))=\|B(\lambda)\|_2\to\infty$. This implies that there exists~$\eta>0$ such that the constraint in Problem~\ref{prob:main_prob} can be replaced by
    \[
    \lambda_i\geq \eta \text{ for all } i, \text { and } \frac1{n+1} \sum_{i=0}^n\lambda_i\leq  1.
    \]
This  constraint defines a compact subset in~$\R^{n+1}$, and since~$\sigma(B(\lambda))$ is a continuous function of the~$\lambda_i$s, we conclude that Problem~\ref{prob:main_prob} admits an optimal  solution~$\bar\lambda=\bar\lambda(n)$.
Furthermore, 
     since the Perron root is a monotonically non-decreasing  function of the matrix entries~\cite[Chapter~8]{HornJohnson2013MatrixAnalysis},  $\bar\lambda$ satisfies~$\frac1{n+1} 
 \sum_{i=0}^n \bar \lambda_i=1$, see also Remark \ref{remT:scaling} below. 

   An induced
   matrix norm is convex in the matrix entries.  Using this it can be shown that Problem~\ref{prob:main_prob} is strictly convex on~$\R^{n+1}_{>0}$~\cite{rfm_max}, 
    and thus  the optimal solution~$\bar\lambda$ is unique. 
\end{Remark}

\begin{example}\label{exa:tryn100}
Fig.~\ref{fig:opt100}     depicts the optimal~$\bar\lambda_i$s for~$n=100 $ (numerically calculated  using Matlab's \emph{fmincon} procedure for constrained
optimization). It  may be seen that the optimal values
are made up of three pieces. The first [third] piece is monotonically increasing [decreasing] and short, and the middle piece is long and approximately constant. 
\end{example}

    \begin{figure}[t]
\begin{center}
\includegraphics[scale=0.8]{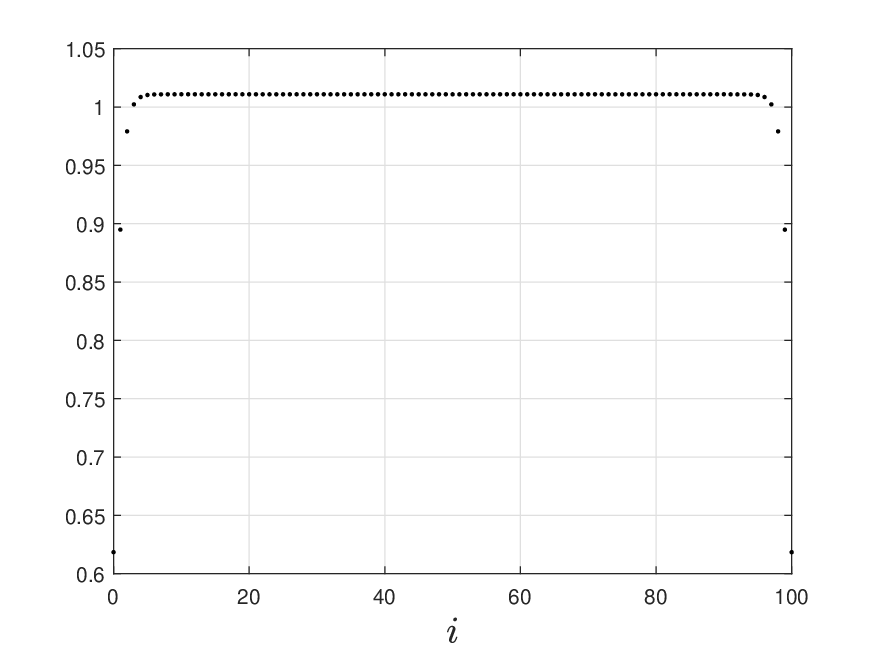}
  \caption{Optimal~$\bar\lambda_i$s  as a function of~$i$ for~$n=100$. }
  \label{fig:opt100}
\end{center}
\end{figure}

\begin{Remark}\label{remT:scaling}
   We use in~\eqref{eq:constri_lam}
the upper bound $\sum_{i=0}^n \lambda_i \leq n+1$ rather than the more general    upper bound~$\sum_{i=0}^n \lambda_i \leq (n+1)c$, for some~$c>0$, because
 for any~$c>0$ we have~$B(c\lambda)=c^{-1/2}B(\lambda)$ and thus~$\sigma(B(c\lambda)) = 
 c^{-1/2}\sigma(B(\lambda)) $.
\end{Remark}

 \begin{Remark}\label{remark:sol_all_ones}
     Note that~$\lambda=1_{n+1}$ satisfies the constraints in Problem~\ref{prob:main_prob}. For this value we get that~$B(1_{n+1})$ is an~$(n+2)\times (n+2)$ tri-diagonal Toeplitz matrix and it is known~\cite{book_toeplitz} that its eigenvalues are
\[
2 \cos(\frac{k\pi}{n+3}), \quad k = 1, 2, ..., n+2, 
\]
so in particular
\be\label{eq:lam_mx_mat-ones}
\sigma(B(1_{n+1}))=  2 \cos( \frac{\pi}{n+3}).
\ee
However, we will show in Lemma \ref{lem:allonesnonopt} below that this is not the  optimal solution
for $n>1$.
\end{Remark}

\begin{Remark}
Letting~$\omega_i:=\lambda_i^{-1/2}$, $i=0,\dots,n$,  we can cast Problem~\ref{prob:main_prob} 
as follows: minimize the Perron root~$\sigma$
  of the~$(n+2) \times (n+2) $
symmetric matrix
\[
A(\omega):=\begin{bmatrix}
    0 & \omega_0& 0 &0 &0 &\dots& 0 &0&0\\
 \omega_0& 0 &\omega_1 &0 &0&\dots& 0 &0&0\\
  0 &\omega_1 &0 &\omega_2&0&\dots& 0 &0&0\\
  &&\vdots\\
   0 &0 &0 &0&0&\dots& \omega_{n-1}& 0& \omega_{n} \\
  0 &0 &0 &0&0&\dots&0& \omega_{n} &0\\
\end{bmatrix}
\]
subject to:
\begin{align}\label{eq:constri}
 \omega_i
 > 0 \text{ for all } i, \text{ and } \frac{1}{n+1} \sum_{i=0}^{n} \omega_i^{-2} \leq 1. 
\end{align}
We can write~$A(\omega)$ as a linear 
function of the~$\omega_i$s: $A(\omega)=\sum_{i=0}^n \omega_iE_i$, where~$E_i$ is the $(n+2)\times (n+2)$ matrix with all entries zero, except for entries~$(i+1,i+2)$ and~$(i+2,i+1)$, that are one. 
However, now the constraint~\eqref{eq:constri}
is nonlinear in the~$\omega_i$s. 
\end{Remark}

Our main result (Theorem~\ref{thm:mainj} below) is that for any~$n$ sufficiently
large the optimal solution~$\bar\lambda$ admits a    turnpike  property.
Before formally
stating it, we 
  describe two applications from dynamical systems theory 
that motivate Problem~\ref{prob:main_prob}. 

\subsection{Maximizing protein production rate in the ribosome flow model}
The ribosome flow model~(RFM) is a phenomenological model for the uni-directional flow of particles 
along a 1D chain of~$n$ sites. 
It can be obtained as the dynamic mean field approximation of a fundamental model from statistical mechanics called the totally  asymmetric simple exclusion principle~\cite{solvers_guide}.

The RFM includes~$n+1$ positive parameters~$\lambda_0,\dots,\lambda_n$, where~$\lambda_i$ is  the transition rate from site~$i$ to site~$i+1$, and~$n$ state-variables~$x_1(t),\dots,x_n(t)$,
with~$x_i(t) $ describing
the density of particles in site~$i$. This is normalized such that~$x_i(t)\in[0,1]$ for all~$t$, where $x_i(t)=0$ [$x_i(t)=1$] corresponds to site~$i$ being completely empty [full].
 The state space is thus the $n$-dimensional unit cube~$[0,1]^n$. 

The RFM equations are:
\begin{align} \label{eq:rfmfull}
 \dot x_1 &= \lambda_0(1-x_1)-\lambda_1 x_1(1-x_2), \nonumber\\
 \dot x_2 &= \lambda_1x_1(1-x_2)-\lambda_2 x_2(1-x_3),\nonumber\\
 &\vdots\\
 \dot x_n &= \lambda_{n-1}x_{n-1}(1-x_n)-\lambda_n x_n.\nonumber
\end{align}
Note that defining~$x_0(t)\equiv 1$ and~$x_{n+1}(t)\equiv 0$, this can be written as
\begin{align} \label{eq:rfmshort}
 \dot x_i &= \lambda_{i-1}x_{i-1}(1-x_i)-\lambda_i x_i(1-x_{i+1}), \quad i=1,\dots,n.
\end{align}
This equation states that the change in the density in site~$i$ is the flow from site~$i-1$ to site~$i$ given by~$ \lambda_{i-1}x_{i-1}(1-x_i)$ minus the flow from site~$i$ to site~$i+1$ given by $\lambda_i x_i(1-x_{i+1})$.
Note that the last flow depends on three factors: the transition rate~$\lambda_i$, 
the density~$x_i$ in site~$i$, and the ``free space'' $(1-x_{i+1})$ in site~$i+1$.
In particular, as the density in a site increases the flow into this site decreases, and this allows to model and analyze the formation of traffic jams of particles using the~RFM. Traffic jams of ribosomes along the mRNA molecule have been implicated with various diseases (see e.g.~\cite{ribo_jam2021}).

The RFM and its variants have  been extensively used to model and analyze the flow of ribosomes along the mRNA molecule  during   translation which is a fundamental process in gene expression
 (see, e.g.,~\cite{reuveni2011genome,rfm_max,rfm_sense, alexander2017,rfmr_2015,down_reg_mrna,randon_rfm,Aditi_abortions,Aditi_extended_2022,Ortho_RFM,RFM_NEGATIVE_FEEDBACK}).
 Networks of interconnected RFMs have been used to model large-scale translation in the cell and the competition for shared resources~\cite{allgower_RFM,Raveh2016,nani,aditi_networks,fierce_compete}.

Ref.~\cite{margaliot2012stability} showed that the RFM admits a unique equilibrium~$e=e(\lambda_0,\dots,\lambda_n)$, with~$e\in(0,1)^n$, and that for any initial condition~$x(0)\in[0,1]^n$ the solution of the RFM converges to~$e$ (see also~\cite{Margaliot2014Entrain}).
 By~\eqref{eq:rfmfull}, the  equilibrium satisfies 
\begin{align} \label{eq:eq_psati}
 \lambda_0    (1-e_1)   &=\lambda_1 e_1(1-e_2)\nonumber\\
 &=  \lambda_2 e_2(1-e_3)\nonumber \\
&\vdots\\
 &= \lambda_{n-2} e_{n-2}(1-e_{n-1})\nonumber\\
 &= \lambda_{n-1} e_{n-1}(1-e_n)\nonumber\\
 &= \lambda_n e_n, \nonumber
\end{align}
that is, at the equilibrium the flows into and out of each site are equal. In particular, the steady state flow out of  the last site is
\be\label{eq:def_R_steady}
R(\lambda):=\lambda_n e_n,
\ee
and this corresponds to the steady state protein production rate, which is an important biological quantity. Note that~\eqref{eq:eq_psati} and the fact that~$e\in(0,1)^n$ implies that
\be\label{eq:up_r}
R(\lambda) < \min\{\lambda_0,\dots,\lambda_n\}.
\ee

\begin{Remark}
    Eq.~\eqref{eq:eq_psati}   admits an interesting  
    graphical representation.
    To demonstrate this, consider the case~$n=3$,  
    and define~$e_0:=1$ and~$e_4:=0$.
    Then~\eqref{eq:eq_psati} becomes 
    \[
     \lambda_0   e_0 (1-e_1)    =\lambda_1 e_1(1-e_2) 
     =  \lambda_2 e_2(1-e_3) =\lambda_3 e_3(1-e_4) .
    \]
    We may view  this as an equality in the  volume of four boxes~$B_1,\dots,B_4$, 
    where~$B_i$ has     dimensions~$\lambda_{ i-1} \times e_{i-1}   \times(1- e_i)$, 
    see Fig.~\ref{fig:boxes}. 
\end{Remark}

\begin{figure}
    \begin{center}
        
\begin{tikzpicture}[scale=1.2,tdplot_main_coords]



 \draw[black] (0,0,0) -- (1,0,0) -- (1,0,1/2) -- (0,0,1/2) -- cycle;
   
 \draw[black] (0,0,0) -- (-1/2,-1/2, 1) -- (-1/2,-1/2, 3/2) -- (0,0,1/2)   ;

 \draw[black]    (-1/2,-1/2, 3/2) --   ( 1/2,-1/2, 3/2)-- (1,0,1/2) ;

\coordinate (A) at (0,0,0);
\coordinate (B) at (-1/2,-1/2,1);

\draw (A) -- (B);

\draw[decorate,decoration={brace,raise= 5pt}]
    (A) -- (B)
    node[midway, xshift=-18pt,yshift=-5pt] {$\scriptstyle \lambda_0$};

    \coordinate (A) at (0,0,0);
\coordinate (B) at (1,0,0);

\draw (A) -- (B);

\draw[decorate,decoration={brace,mirror,raise= 5pt}]
    (A) -- (B)
    node[midway, yshift=-15pt ] {$\scriptstyle e_0 $};

\coordinate (A) at (1,0,0);
\coordinate (B) at (1,0,1/2);

\draw (A) -- (B);

\draw[decorate,decoration={brace,mirror,raise= 5pt}]
    (A) -- (B)
    node[right , yshift=-12pt,xshift=5pt ] {$\scriptstyle 1-e_1$};
 
   \coordinate (A) at (1,0,1/2);
\coordinate (B) at (1,0,1 );

\draw (A) -- (B);

\draw[decorate,decoration={brace,mirror,raise= 5pt}]
    (A) -- (B)
    node[right , yshift=-12pt,xshift=5pt ] {$\scriptstyle e_1$};


 \draw[black] (1,0,1/2)  -- (1+3/4,0,1/2) -- (1+3/4,0,1) -- (1,0,1)  -- cycle;
   
 \draw[black] (1,0,1/2)--(0.35,-0.65,1.8) --(0.35,-0.65,2.3)  -- (1,0,1)  ;

 \draw[black]     (0.35,-0.65,2.3)  --  (0.35+3/4,-0.65,2.3)  --  (1+3/4,0,1 ) ;
 
  \coordinate (A) at (1,0,1/2);
\coordinate (B) at (0.35,-0.65,1.8);

\draw (A) -- (B);

\draw[decorate,decoration={brace, raise= 5pt}]
    (A) -- (B)
    node[right , yshift=-12pt,xshift=-13pt ] {$ \scriptstyle \lambda_1$};

 \draw[black] (1+3/4,0,1 ) -- (2,0,1) -- (2,0,3/2) -- (1+3/4,0,3/2)--cycle;
   
 \draw[black]   (1+3/4,0,1 ) -- (  -0.2, -1.95, 4.90 )-- (  -0.2, -1.95, 4.90+1/2 )--(1+3/4,0,3/2);

 \draw[black]     (  -0.2, -1.95, 4.90+1/2 )-- (  1/4-0.2, -1.95, 4.90+1/2 )-- (2,0,3/2 ) ;
  \coordinate (A) at (1+3/4,0,1);
\coordinate (B) at (2,0,1);

\draw (A) -- (B);

\draw[decorate,decoration={brace,mirror, raise=  2pt}]
    (A) -- (B)
    node[right , yshift=-12pt,xshift=-10pt ] {$\scriptstyle  e_2$};
 
 \coordinate (A) at (1 ,0,1);
\coordinate (B) at (1+3/4,0,1);

\draw (A) -- (B);

\draw[decorate,decoration={brace,   raise=  2pt}]
    (A) -- (B)
    node[midway,yshift=10pt,xshift=-9pt] {$\scriptstyle 1- e_2$};

\coordinate (A) at (1+3/4 ,0,1);
\coordinate (B) at (-0.2,-1.95,4.9);

\draw (A) -- (B);

\draw[decorate,decoration={brace,  mirror, raise=  2pt}]
    (A) -- (B)
    node[midway,yshift=10pt,xshift=3pt] {$\scriptstyle \lambda_2$};

     \coordinate (A) at (2 ,0,1);
\coordinate (B) at (2,0,3/2);

\draw (A) -- (B);

\draw[decorate,decoration={brace,  mirror,  raise=  2pt}]
    (A) -- (B)
    node[midway,xshift=15pt ] {$\scriptstyle 1- e_3$};

\coordinate (A) at (2 ,0,3/2);
\coordinate (B) at (2,0,2);

\draw (A) -- (B);

\draw[decorate,decoration={brace,  mirror, raise=  2pt}]
    (A) -- (B)
    node[midway,xshift=10pt ] {$\scriptstyle e_3$};
    

 \draw[black]  (2,0,3/2)--(3,0,3/2)--(3,0,2)--(2,0,2)--cycle;

 \draw[black]  (2,0,3/2)-- (2-0.7*0.65,-0.7*0.65,3/2+0.7*1.3 ) -- (2-0.7*0.65,-0.7*0.65,3/2+0.7*1.3 +1/2) --(2,0,2);

\draw[black]  (2-0.7*0.65,-0.7*0.65,3/2+0.7*1.3 +1/2) --(3-0.7*0.65,-0.7*0.65,3/2+0.7*1.3 +1/2)--(3,0,2) ;

 \coordinate (A) at ( 2,0,3/2);
\coordinate (B) at (1.545, -0.455, 2.410);

\draw (A) -- (B);

\draw[decorate,decoration={brace, mirror,  raise=  2pt}]
    (A) -- (B)
    node[midway,yshift=10pt ] {$\scriptstyle \lambda_3$};

  \coordinate (A) at (2,0,2  );
\coordinate (B) at ( 3,0,2);

\draw (A) -- (B);

\draw[decorate,decoration={brace,    raise=  2pt}]
    (A) -- (B)
    node[midway,yshift=10pt,xshift=-8pt ] {$\scriptstyle 1-e_4$};

\end{tikzpicture}
\caption{ Graphical representation of   Eq.~\eqref{eq:eq_psati}. The volume of   the  four boxes is equal. \label{fig:boxes} }

    \end{center}
\end{figure}

Ref.~\cite{rfm_max} derived a spectral representation of the steady state values~$e$ and~$R$ in the~RFM 
(see also~\cite{EYAL_RFMD1}). Given the positive rates~$\lambda_i$, form the matrix~$B(\lambda)$ in~\eqref{eq:B_spec}, and let~$\sigma \equiv \sigma(B(\lambda))$ be its Perron root, and~$v\in\R^{n+2}_{>0}$ 
 a corresponding Perron vector. Then the steady-state production rate is
\be\label{eq:R_spect}
R(\lambda)=(\sigma(B(\lambda )))^{-2},
\ee
and the equilibrium satisfies 
\begin{align}  \label{eq:e_spet}
e_i=\frac{v_{i+2}}{\lambda_i^{1/2} \sigma  v_{i+1}},\quad i=1,\dots,n.
\end{align}
This implies that Problem~\ref{prob:main_prob} 
coincides with the problem of maximizing the steady-state production rate~$R$ subject to the ``total budget'' constraint on the transition rates in~\eqref{eq:constri_lam}.
This problem is important 
for example in  re-engineering heterologous genes where the goal is  to maximize their translation rate in a host organism (see, e.g.,~\cite{zur2020}).

\begin{example}\label{exa:n1}
    Consider the case~$n=1$. In this case the RFM reduces to the scalar equation~$\dot x_1=\lambda_0(1-x_1)-\lambda_1 x_1$, so the equilibrium is~$e_1=\frac{\lambda_0}{\lambda_0+\lambda_1}$ and the steady-state production rate is~$R=\lambda_1 e_1=  \frac{\lambda_0\lambda_1}{\lambda_0+\lambda_1}$. The matrix in~\eqref{eq:B_spec} becomes
    \[  
B =\begin{bmatrix}
    0 & \lambda_0^{-1/2}& 0  \\
 \lambda_0^{-1/2}& 0 &\lambda_1^{-1/2}  \\
   0 & \lambda_1^{-1/2}& 0
\end{bmatrix}. 
   \]
Its Perron eigenvalue  is 
$\sigma = ( \frac1{\lambda_0}+\frac1{\lambda_1} )^{1/2}$, so indeed~$R=\sigma^{-2}$. A Perron eigenvector of~$B$ is 
\[
\begin{bmatrix}
    (\lambda_1/\lambda_0)^{1/2} &   
    ( 1+\frac{\lambda_1}{\lambda_0}  )^{1/2}
     &  1
\end{bmatrix}^\top,
\]
   so~\eqref{eq:e_spet} gives
\begin{align*}
    e_1 &= \frac{1}{\lambda_1^{1/2} (\frac1{\lambda_0}+\frac1{\lambda_1})^{1/2} (1+\frac{\lambda_1}{\lambda_0})^{1/2 }}\\
    &= \frac{\lambda_0}{\lambda_0+\lambda_1}.
\end{align*}
Note that we can solve Problem~\ref{prob:main_prob} for $n=1$ due to the above formula for $\sigma$. The minimum is $\bar\sigma =\sqrt{2}$, and the unique minimizer is $\bar \lambda = 1_{2}$. This complements our main result in  Theorem~\ref{thm:mainj} that is formulated for any~$n>1$.
\end{example}

\begin{example}\label{exa:eis100}
Fig.~\ref{fig:optei100}     depicts the optimal~$\bar e_i$s (that is, the steady state densities corresponding to the optimal~$\bar
\lambda_i$s) for~$n=100 $. 
It  may be seen that the optimal values
are made up of three pieces. The first  and  third pieces are monotonically decreasing   and short,
and the middle piece is long with~$e_i\approx 1/2$. 
  In this case,
$\bar \sigma =1.9892$, and
the optimal steady-state production rate is~$\bar R=(\bar \sigma)^{-2}=0.2527$.  For the non-optimal vector~$\lambda=1_{101}$, we get from~\eqref{eq:lam_mx_mat-ones} that
$\sigma(1_{101})=1.9991$, so~$R(1_{101})=(\sigma(1_{101}))^{-2}= 0.2502$.
\end{example}

    \begin{figure}[h]
\begin{center}
\includegraphics[scale=0.8]{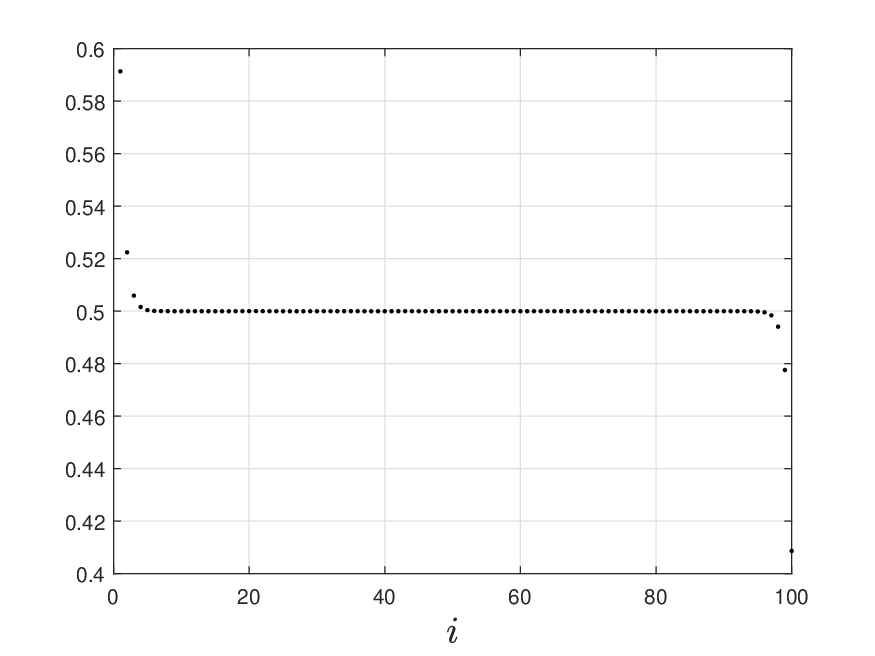}
  \caption{Optimal steady state densities~$\bar e_i$   as a function of~$i$ for~$n=100$. }
  \label{fig:optei100}
\end{center}
\end{figure}

\begin{Remark}\label{rem:anot_not_opt}
Fix~$n\in\mathbb N$. Let~$c(n):= (n+1)/n$.
Define a vector
$\tilde  \lambda(n)\in\R^{n+1}$ by
\be\label{eq:def_tilde_lam}
\tilde \lambda(n):=c(n) \begin{bmatrix}
    1/2& 1&1&\dots&1&1/2
\end{bmatrix}.
\ee
Then~$\tilde\lambda$ satisfies the constraints in Problem~\ref{prob:main_prob},  and the corresponding
Perron eigenvalue and normalized
eigenvector of the matrix~$B$ are
\begin{align}\label{eq:tilde_sigma}
\tilde \sigma:=
\sigma(B(\tilde\lambda))=2c^{-1/2}=2\left( \frac{n}{n+1} \right )^{1/2},
\end{align}
and
\[
\tilde v=(2(n+1))^{-1/2}\begin{bmatrix}
1&    \sqrt{2}&\sqrt{2}&\dots&
    \sqrt{2}&1
\end{bmatrix}^\top.
\]
Now~\eqref{eq:R_spect} gives
$\tilde R:=
R(\tilde\lambda)= \frac{n+1}{4n}$,
and
\eqref{eq:e_spet} gives
\be\label{eq:eis_nonop}
\tilde e_i=1/2,\quad i=1,\dots,n.
\ee
We   explain in Lemma \ref{lem:allonesnonopt} below why~$\tilde \lambda$ is also not the optimal solution of Problem~\ref{prob:main_prob} for any~$n>1$. 
By optimality of~$\bar\lambda$, we then have
\be\label{eq:optr_lower_bound}
\bar R(n)>\tilde  R(n)=\frac{n+1}{4n}.
\ee
\end{Remark}

 Ref.~\cite{rfm_sense} used the spectral representation of~$e$ and~$R$ to prove    that the sensitivities
of~$R$ w.r.t. a change in the rates, that is, 
 \be\label{eq:desf_sense}
 s_i(\lambda):=\frac{\partial}{\partial\lambda_i} R(\lambda),\quad i=0,1,\dots,n ,
 \ee
satisfy 
\begin{align}\label{eq:derri}
                         s_i =  \frac{2
                         R ^{ 3/2} v_{i+1}v_{i+2} }{ \lambda_i^{3/2}  },
\end{align}
where $v$ denotes the normalized Perron vector of $B(\lambda)$. Note that the $\lambda$-gradient of the constraint $\sum_{i=0}^n\lambda_i=n+1$ is given by the constant vector $1_{n+1}$ and therefore the vector 
$s(\lambda)=\begin{bmatrix}
    s_0(\lambda)&\dots&s_n(\lambda)
\end{bmatrix}^\top$
satisfies~$s(\bar \lambda)=\gamma  1_{n+1}$ 
for some Lagrange mulitplier $\gamma \in \mathbb R$. In other words, at the minimizer    all the partial derivatives $s_i(\bar \lambda)$ are equal: 
\be\label{eqT:Lagrange}
s_0(\bar \lambda) = s_1(\bar \lambda) = \ldots =s_n(\bar \lambda).
\ee


\subsection{Minimizing the maximal characteristic frequency in a linear chain}
As  another application of Problem~\ref{prob:main_prob},
consider an ordered 1D  chain of~$N$ masses~$m_1,\dots,m_N$, with a spring 
with elasticity~$K_j$ connecting masses~$m_j$ and~$m_{j+1}$. Let~$x_j(t)$ denote the location of mass~$m_j$ at time~$t$. Then
\be\label{eq:masses_loc}
m_j \ddot x_j=K_j(x_{j+1}-x_{j})
 {+}
K_{j-1}(x_{j-1}-x_j),\quad j=1,\dots,N,
\ee
with~$K_0:=0$ and~$K_{N}:=0$. A series of coordinate transformations~\cite{dyson_chain} yields the equivalent model 
\[
\dot u = S u
\]
where~$u:[0,\infty)\to \R^{2N-1}$ is the state vector,  and~$S$
 is the tri-diagonal skew-symmetric matrix
 \[
 S:=\begin{bmatrix}
     0&p_1^{1/2}&0&\dots&0&0&0 \\
     -p_1^{1/2}&0&p_2^{1/2} &\dots &0 &0&0\\
     &&&\vdots\\
     0&0&0&\dots&-p_{2(N-1)-1}^{1/2}&0&
     p_{2(N-1)}^{1/2}\\
     0&0&0&\dots&0&-p^{1/2}_{2(N-1)}&0
 \end{bmatrix} , 
 \]
with~$p_{2j-1}:=\frac{K_j}{m_j}$ and~$p_{2j}:=\frac{K_j}{m_{j+1}}$. The eigenvalues of~$S$ are the purely imaginary  values $\sqrt{-1}\omega_i$, with
\[
-\omega_{N-1} < \dots< -\omega_1<\omega_0<\omega_1<\dots<\omega_{N-1},
\]
and~$\omega_0=0$. These are the characteristic frequencies of the linear chain.
The maximal frequency $\omega_{N-1}$ is often associated to the strain and 
tear in such mechanical systems.
Ref.~\cite{min_spring} 
showed that the problem of minimizing~$\omega_{N-1}$, that is, minimizing  the maximal characteristic  frequency of the chain under the constraint
\be\label{eq:mech_constraint}
\sum_{i=1}^{N-1}\frac{m_i+m_{i+1}}{K_i}\leq N-1,
\ee
coincides with Problem~\ref{prob:main_prob}.
\begin{Remark}
    The constraint in~\eqref{eq:mech_constraint} can be explained as follows. Every couple of  two consecutive masses~$m_i,m_{i+1} $ along the chain is replaced by  a ``weighted equivalent mass''~$\frac{m_i+m_{i+1}}{K_i}$, where~$K_i$ is the elasticity of the spring that connects the two masses.
    The constraint upper bounds the   sum of all these  equivalent masses. For example, when~$N=2$, 
    Eq.~\eqref{eq:masses_loc} 
    becomes
\begin{align*}
m_1 \ddot x_1&=K_1(x_{2}-x_{1}),\\
m_2 \ddot x_2&=K_1(x_{1}-x_{2}),
\end{align*}
and letting~$\delta x:=x_2-x_1$ gives 
\end{Remark}
\[
\frac{d^2}{dt^2} {\delta x}=-K_1\left(m_1^{-1}+m_2^{-1}\right)\delta x,
\]
so~$\omega_1=\sqrt{
K_1
\left(m_1^{-1}+m_2^{-1}\right)}$
and the optimization problem here is to minimize~$\omega_1$ subject to the constraint~$\frac{m_1+m_2}{K_1}\leq 1$.

Ref.~\cite{min_spring} proved several useful  properties of the optimal solution of Problem~\ref{prob:main_prob}. First, the optimal rates are symmetric:
\be\label{eq:symm}
\bar \lambda_i=\bar\lambda_{n-i},
\quad i=0,\dots,n,
\ee
and strictly increasing up to the middle of the chain, that is, 
\be\label{eq:lam_mono}
\bar\lambda_0<\bar\lambda_1<\dots<\bar\lambda_{\lfloor n/2 \rfloor}.
\ee
Note that these equations imply that~$\lambda_{\lfloor n/2\rfloor }$ is always the maximal rate, and if~$n$ is odd then~$\lambda_{\lfloor n/2 \rfloor}= \lambda_{\lfloor n/2 \rfloor+1}$.

Ref.~\cite{min_spring} also proved that  the corresponding  steady  state densities  in the RFM satisfy
\be\label{eq:steady_state_symm}
\bar e_i=1-\bar e_{n+1-i}, \quad i=1,\dots,n, 
\ee
and   
\be\label{eq:ratio}
\frac{\bar e_i}{1-\bar e_i} = \frac{\bar \lambda_i}{\bar \lambda_{i-1}},
\quad i=1,\dots, n.
\ee
Note that this implies in particular that if~$n$ is odd, that is,~$n=2k+1$,   then~$\bar e_{k+1}=1/2$.  

\section{Main results}\label{sec:main}
We can now state our main results. All the  proofs are given in Section~\ref{sec:proofs}.

\begin{Theorem}\label{thm:mainj}
 Fix~$n\in\mathbb N$ with $n>1$.  Then  the optimal parameters $\bar\lambda=\bar\lambda(n)$ and the minimal Perron root~$\bar\sigma=\bar\sigma(n)$ satisfy: 
\be\label{eq:sigmbo}
  2\sqrt{1-\frac{4\ln(2)} {n+1}}<\bar \sigma<2\sqrt{1- \frac{1}{n+1} },
\ee
and
\be\label{eq:lamppp}
\frac{4}{\bar\sigma^2}\left (1-\frac{\ln(2)}{2^i}\right) < \bar\lambda_i < \frac{4}{\bar\sigma^2}, \text{ for all }  i \in\{0,1,\dots, \lfloor n/2\rfloor-1\},
\ee
if~$n$ is even then
\be\label{eq:lameven}
\frac{4}{\bar\sigma^2}\left (1-\frac{4}{3}\frac{\ln(2)}{2^{n/2}}\right) < \bar\lambda_{n/2} < \frac{4}{\bar\sigma^2},  
\ee
and if~$n$ is odd then
\be\label{eq:lampoddp}
\frac{4}{\bar\sigma^2}\left (1-\frac{\ln(2)}{2^ {\lfloor n/2 \rfloor}}\right) < \bar\lambda_{\lfloor n/2 \rfloor}  =\bar\lambda_{\lfloor n/2 \rfloor+1}  < \frac{4}{\bar\sigma^2}.
\ee
Furthermore, if~$n\geq 36$ then
\be\label{eq:36}
0<\frac{4}{\bar\sigma ^2}-\bar \lambda_i<\frac1{2^i} \text{ for all } i=0,1,\dots,\lfloor n/2\rfloor.
\ee
\end{Theorem}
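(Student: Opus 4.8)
The plan is to reduce the optimality system for $\bar\lambda$ to a single scalar second–order recursion, to recognise the turnpike value $4/\bar\sigma^2$ as a \emph{hyperbolic} equilibrium of that recursion, and to deduce all of the estimates from a quantitative study of the recursion near that equilibrium.

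\emph{Step 1 --- reduction to a recursion.} Let $v>0$ be a Perron vector of $B(\bar\lambda)$. Condition~\eqref{eqT:Lagrange} together with the sensitivity formula~\eqref{eq:derri} forces $v_{i+1}v_{i+2}\,\bar\lambda_i^{-3/2}$ to be independent of $i$; solving this for $\bar\lambda_i$ and inserting it into the eigenvector equations $B(\bar\lambda)v=\bar\sigma v$, the rescaled variables $w_j$, proportional to $v_j^{2/3}$ (with $w_0:=w_{n+3}:=0$), satisfy
\[
w_{j+1}=w_j^2-w_{j-1}\ \ (j=1,\dots,n+2),\qquad w_j>0\ \ (1\le j\le n+2),
\]
together with $\bar\lambda_i=w_{i+1}w_{i+2}/\bar\sigma^2$ and $\sum_{i=0}^n w_{i+1}w_{i+2}=(n+1)\bar\sigma^2$. (For $n=1$ this yields $w=(0,2^{1/3},2^{2/3},2^{1/3},0)$, hence $\bar\lambda=1_2$, matching Example~\ref{exa:n1}.) The constant sequence $w\equiv2$ is a fixed point of the recursion, and it corresponds exactly to $\bar\lambda_i=4/\bar\sigma^2$, i.e.\ to the turnpike value. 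The linearisation of $(a,b)\mapsto(b,b^2-a)$ at $(2,2)$ has characteristic roots $2\pm\sqrt3$, and $0<2-\sqrt3<\tfrac12$: the hyperbolicity of the turnpike, with a contracting multiplier below $\tfrac12$, is the origin of the geometric factor $2^{-i}$ in the theorem.

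\emph{Step 2 --- the upper bounds $\bar\lambda_i<4/\bar\sigma^2$.} First, $0<w_j<2$ for $1\le j\le n+2$: if $j_0$ were the smallest index with $w_{j_0}\ge2$, then $w_{j_0+1}=w_{j_0}^2-w_{j_0-1}>4-2=2$, and a short induction shows the sequence is strictly increasing and stays above $2$ from $j_0$ on, forcing $w_{n+3}>0$ and contradicting $w_{n+3}=0$. Hence $\bar\lambda_i=w_{i+1}w_{i+2}/\bar\sigma^2<4/\bar\sigma^2$, which is the upper inequality in~\eqref{eq:lamppp},~\eqref{eq:lameven},~\eqref{eq:lampoddp}. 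Invoking the known symmetry~\eqref{eq:symm} and monotonicity~\eqref{eq:lam_mono} (and $w_1>1$, which follows from $\bar\lambda_0>\bar R$ in~\eqref{eq:up_r}), the sequence $(w_j)$ is unimodal and symmetric about the midpoint, so that $d_j:=2-w_j$ lies in $(0,2)$ and decreases from $d_0=2$ to its minimum at the centre; it obeys $d_{j+1}=4d_j-d_{j-1}-d_j^2$.

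\emph{Step 3 --- the lower bounds and the $\bar\sigma$–estimates.} Since $\tfrac4{\bar\sigma^2}-\bar\lambda_i=\bar\sigma^{-2}\bigl(2d_{i+1}+2d_{i+2}-d_{i+1}d_{i+2}\bigr)<2\bar\sigma^{-2}(d_{i+1}+d_{i+2})$, all of~\eqref{eq:lamppp},~\eqref{eq:lameven},~\eqref{eq:lampoddp} follow from the single decay estimate
\[
d_j<\tfrac{8\ln2}{3}\,2^{-j}\qquad\bigl(1\le j\le \lfloor n/2\rfloor+1\bigr),\qquad\text{i.e.}\qquad w_j>2\Bigl(1-\tfrac{4\ln2}{3}\,2^{-j}\Bigr):
\]
feeding this into the identity above gives $\bar\lambda_i>\tfrac4{\bar\sigma^2}(1-\ln2\cdot2^{-i})$ for $i\le\lfloor n/2\rfloor-1$, and the two centre cases reproduce the constants $\tfrac43\ln2$ (even $n$, using $w_{n/2+1}=w_{n/2+2}$) and $\ln2$ (odd $n$, where $\bar\lambda_{\lfloor n/2\rfloor}=\bar\lambda_{\lfloor n/2\rfloor+1}$ by~\eqref{eq:symm} and~\eqref{eq:ratio}). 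I would prove the decay estimate by an induction controlling the ratios $q_j:=d_j/d_{j-1}\in(0,1)$, which obey the first–order recursion $q_{j+1}=4-q_j^{-1}-d_j$ having $2-\sqrt3$ as (unstable) equilibrium: the no–overshoot fact of Step~2 traps $(w_j)$ close enough to $w\equiv2$ to force $q_j$ below $\tfrac12$ past a short initial segment (in the bulk $q_j\approx2-\sqrt3$), a direct estimate bounds $q_1=d_1/2$, and then $d_j=2\,q_1\cdots q_j$ yields the claim. Granting these bounds, the upper inequality in~\eqref{eq:sigmbo} is exactly~\eqref{eq:optr_lower_bound} ($\bar\sigma^2=1/\bar R<4n/(n+1)$), and the lower one follows by summing: $n+1=\sum_{i=0}^n\bar\lambda_i>\tfrac4{\bar\sigma^2}\bigl((n+1)-\ln2\sum_{i=0}^n2^{-\min(i,n-i)}\bigr)>\tfrac4{\bar\sigma^2}\bigl((n+1)-4\ln2\bigr)$, i.e.\ $\bar\sigma^2>4\bigl(1-\tfrac{4\ln2}{n+1}\bigr)$. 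Finally~\eqref{eq:36} combines~\eqref{eq:lamppp}--\eqref{eq:lampoddp} with these $\bar\sigma$–bounds: $0<\tfrac4{\bar\sigma^2}-\bar\lambda_i<\tfrac4{\bar\sigma^2}\cdot\tfrac43\ln2\cdot2^{-i}<2^{-i}$ as soon as $\bar\sigma^2>\tfrac{16}3\ln2$, and $4\bigl(1-\tfrac{4\ln2}{n+1}\bigr)>\tfrac{16}3\ln2$ holds precisely for $n\ge36$.

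\emph{Main obstacle.} The crux is the decay estimate in Step~3, and within it the ``boundary layer'' near $j=0$: $w_1$ is not available in closed form, only implicitly through $w_{n+3}=0$, so one must show that the shooting trajectory enters the regime of $\tfrac12$–contraction after boundedly many steps and with the sharp leading constant $\tfrac{8\ln2}{3}$ --- equivalently, one needs a quantitative description of the stable manifold of the hyperbolic fixed point $w\equiv2$ and of the point where it meets the line $w_0=0$, matched to the turning behaviour at the centre (where the symmetry forces $2d_{c-1}=d_c(4-d_c)$, so the central ratio is $\tfrac12+O(d_c)$ and is absorbed by the accumulated slack from the bulk). The symmetry and monotonicity of $(w_j)$ are used throughout to confine the whole discussion to the first half of the chain.
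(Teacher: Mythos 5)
Your overall framework matches the paper's: reduce the optimality conditions to a second--order scalar recursion (your $w_{j+1}=w_j^2-w_{j-1}$ is the paper's $\bar a_{j-1}+\bar a_{j+1}=\bar r\bar a_j^2$ after the rescaling $w_j=\bar r\bar a_j$, with fixed point $w\equiv 2$ vs.\ $\bar a\equiv\bar q=2/\bar r$), identify the $2\pm\sqrt3$ hyperbolic spectrum, prove no-overshoot, establish geometric decay toward the fixed point, and convert via $\bar\lambda_i=w_{i+1}w_{i+2}/\bar\sigma^2$. Your Step~2 (no-overshoot) is essentially the paper's Lemma~\ref{lemma:climbtoq}, and your Step~3 book-keeping (the identity $\frac4{\bar\sigma^2}-\bar\lambda_i=\bar\sigma^{-2}(2d_{i+1}+2d_{i+2}-d_{i+1}d_{i+2})$, the summation argument for the lower $\bar\sigma$-bound, and the $n\ge36$ computation) is correct and matches the paper.

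However, the core quantitative step --- the decay estimate $d_j<\tfrac{8\ln2}{3}\,2^{-j}$ --- is not proved; you yourself flag it as ``the main obstacle.'' Your proposed route, controlling the ratios $q_j:=d_j/d_{j-1}$ through the first-order recursion $q_{j+1}=4-q_j^{-1}-d_j$, runs straight into the shooting problem: $q^*=2-\sqrt3$ is the \emph{unstable} root of $q\mapsto 4-1/q$, so bounding $q_j$ below $\tfrac12$ from above requires precisely the stable-manifold control you say you do not have. The paper sidesteps all of this with an elementary multiplicative inequality. Dividing the recursion by $\bar a_{i-1}$ and using the half-chain monotonicity $\bar a_{i+1}>\bar a_{i-1}$ gives $\bar a_i^2>\bar q\,\bar a_{i-1}$, which by the induction $\bar a_{i+1}^2>\bar q\,\bar a_i>\bar q\cdot\bar q^{(2^{i-1}-1)/2^{i-1}}$ yields $\bar a_{i+1}>\bar q^{(2^i-1)/2^i}$ directly; translated to your variables and combined with $e^{-x}>1-x$ and $\ln\bar q<\tfrac23\ln2$ this is exactly $d_j<\tfrac{8\ln2}{3}\,2^{-j}$. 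That inequality is the decisive trick, and without it (or something equivalent) your Step~3 does not close. A second, smaller, issue: for odd $n$ your stated range $j\le\lfloor n/2\rfloor+1$ together with unimodality gives only the constant $\tfrac43\ln2$ at the centre, not the claimed $\ln2$ of~\eqref{eq:lampoddp}; the paper obtains the sharper constant by extending the inductive estimate one step further for odd $n$, via the symmetry identity $\bar q\,\bar a_{j-1}=\bar a_j^2$ holding with equality at the midpoint.
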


We now describe    several  implications of these results. 

First, 
in using the RFM to model mRMA translation, the value of~$n$ can be several hundreds. It is thus  
important 
to know the optimal protein production rate for large values 
on~$n$. Eq.~\eqref{eq:sigmbo} implies that 
 \[
    \lim_{n\to \infty}\bar \sigma (n)=2. 
    \]
Since~$\bar R(n)=(\bar \sigma(n))^{-2}$,  this implies   that~$\lim_{n\to \infty}\bar R(n)=1/4$. 
As shown in 
Example~\ref{exa:eis100} for~$n=100 $ we already have a value close to this   limit, namely,      $\bar \sigma(100) =1.9892$, and
the optimal steady-state production rate is~$\bar R(100)=(\bar \sigma)^{-2}=0.2527$.  

Second, note that combining~\eqref{eq:symm} and~\eqref{eq:36} yields a kind of turnpike property. The optimal~$\bar \lambda_i$s converge exponentially fast
to the value~$4 / \bar\sigma^2 $, and in particular the optimal values in the bulk are all approximately equal to~$4 / \bar\sigma^2$. However, this is not exactly  the form of the turnpike property known in optimal control. Translated to the notation used here, this property demands the following, cf.\ \cite[Eq. (11)]{FauG22}:
\begin{equation}\label{eq:turnpike}
\begin{array}{l}
\mbox{There exists a scalar }
\lambda^\star \in\R_{>0} \mbox{ such that 
the following property holds for any } 
n\in\mathbb{N}. 
\mbox{ For any } \varepsilon>0\\ 
\mbox{there is an integer }      N(\varepsilon)\in\mathbb{N} \mbox{ such that } |\bar \lambda_i-\lambda^*|<\varepsilon  \mbox{ for all } i=0,\ldots,n,
\mbox{ except for at most }   N(\varepsilon)\\
\mbox{indices.}
\end{array}
\end{equation}
While Theorem \ref{thm:mainj} implies  that $\bar\lambda_i$ is close to $4/\bar\sigma^2$ most of the time in this sense, this value depends on~$n$. In contrast, \eqref{eq:turnpike} requires that $\lambda^*$ is independent of $n$. We can, however, resolve this problem by using the fact that   $4/\bar\sigma^2$ converges to $\lambda^*:=1$ as $n\to\infty$.
Hence, given $\varepsilon>0$ we can pick $N(\varepsilon)$ so large that 
$1 < 4/\bar\sigma^2 < 1+ \varepsilon$ 
holds for all $n\ge N(\varepsilon)$ and $\frac1{2^{N(\varepsilon)/2}}<\varepsilon$ (choosing $N(\varepsilon)$ even). This implies that the desired inequality $|\bar \lambda_i-\lambda^*|<\varepsilon$ indeed holds
for all~$i=\frac{N(\varepsilon)}{2},\ldots,n-\frac{N(\varepsilon)}{2}$, i.e., for all but at most~$N(\varepsilon)$ indices.

Third, 
Fig.~\ref{fig:opt100} shows that the maximal value of the~$\bar \lambda_i$s is larger than one for the case~$n=100$. 
Eq.~\eqref{eq:sigmbo} gives~$\frac{4}{\bar\sigma^2}>1+\frac{1}{n}$, and combining  this~\eqref{eq:lameven}
and~\eqref{eq:lampoddp}
 implies that there exists~$c>0$ such that
     \[
\bar\lambda_{\lfloor n/2\rfloor  }  > \left (1+\frac{1}{n}\right )\left(1- \frac{c}{2^{n/2}}\right).
\]
Thus, as~$n$ increases the  maximal optimal rate cannot   converge    to $\lambda^*=1$ at an exponential rate. In particular, the turnpike property \eqref{eq:turnpike} cannot be of exponential form (cf.\ \cite[eq. (18)]{FauG22}). 

\section{Proofs}\label{sec:proofs}
 We begin with several auxiliary results. 

As noted in 
Remarks~\ref{remark:sol_all_ones} and~\ref{rem:anot_not_opt}, 
for any~$n$ we have that~$\lambda=1_{n+1}$ and~$\tilde \lambda$ defined in~\eqref{eq:def_tilde_lam} both satisfy the constraints in Problem~\ref{prob:main_prob}. We require the following   result.  

\begin{Lemma}\label{lem:allonesnonopt}
    For any~$n>1$, neither $\lambda=1_{n+1}$ nor~$\tilde \lambda$ defined in~\eqref{eq:def_tilde_lam} is the optimal solution of Problem~\ref{prob:main_prob}.
\end{Lemma}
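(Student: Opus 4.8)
The plan is to show that neither candidate satisfies the Lagrange (first-order) optimality condition \eqref{eqT:Lagrange}, i.e.\ that the sensitivities $s_i$ from \eqref{eq:derri} are not all equal at these points. Since Problem~\ref{prob:main_prob} is strictly convex on $\R^{n+1}_{>0}$ (Remark~\ref{rem:uniq}), the unique minimizer is the unique point at which all the $s_i$ coincide; so exhibiting any pair of unequal sensitivities at $\lambda=1_{n+1}$ (resp.\ $\tilde\lambda$) proves that point is not optimal. By the symmetry \eqref{eq:symm} of the true optimum and the symmetry of both candidates under $i\mapsto n-i$, it actually suffices to compare a boundary sensitivity, say $s_0$, with an interior one, say $s_{\lfloor n/2\rfloor}$.

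First I would treat $\lambda=1_{n+1}$. Here $\sigma=2\cos(\pi/(n+3))$ by \eqref{eq:lam_mx_mat-ones}, $R=\sigma^{-2}$, and all $\lambda_i=1$, so by \eqref{eq:derri} we have $s_i = 2R^{3/2} v_{i+1}v_{i+2}$, where $v$ is the normalized Perron vector of the tri-diagonal Toeplitz matrix $B(1_{n+1})$. The Perron eigenvector of an $(n+2)\times(n+2)$ symmetric tri-diagonal Toeplitz matrix with off-diagonal $1$ is explicitly $v_j \propto \sin\!\big(\tfrac{j\pi}{n+3}\big)$, $j=1,\dots,n+2$ (see \cite{book_toeplitz}). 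Hence $s_i \propto \sin\!\big(\tfrac{(i+1)\pi}{n+3}\big)\sin\!\big(\tfrac{(i+2)\pi}{n+3}\big)$, and for $n>1$ this product is strictly not constant in $i$: e.g.\ $\sin(\tfrac{\pi}{n+3})\sin(\tfrac{2\pi}{n+3}) < \sin(\tfrac{k\pi}{n+3})\sin(\tfrac{(k+1)\pi}{n+3})$ for $k$ near the middle, since $j\mapsto\sin(\tfrac{j\pi}{n+3})$ is increasing on $\{1,\dots,\lceil(n+3)/2\rceil\}$. So $s_0\neq s_{\lfloor n/2\rfloor}$ and \eqref{eqT:Lagrange} fails; alternatively, one can invoke \eqref{eq:lam_mono}, which already forces $\bar\lambda_0<\bar\lambda_1$ for $n>1$ while $1_{n+1}$ has all entries equal, giving an even quicker contradiction.

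Next, for $\tilde\lambda$ from \eqref{eq:def_tilde_lam}: Remark~\ref{rem:anot_not_opt} already records $\tilde v = (2(n+1))^{-1/2}[1,\sqrt2,\dots,\sqrt2,1]^\top$ and $\tilde e_i = 1/2$. Plugging into \eqref{eq:derri} with $\lambda_i = c(n)$ or $c(n)/2$ as appropriate, the sensitivities $s_i$ are computed directly: the interior ones involve $v_{i+1}v_{i+2}=2/(2(n+1))$ with $\lambda_i=c(n)$, whereas $s_0$ involves $v_1 v_2 = \sqrt2/(2(n+1))$ with $\lambda_0 = c(n)/2$, and these do not match — the ratio $s_0/s_1$ works out to a fixed constant different from $1$ (a clean $2^{3/2}\cdot 2^{-1/2}=2$ up to the bookkeeping of which $v$-entries and $\lambda$-entries enter), independent of $n>1$. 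One could instead use \eqref{eq:ratio}: it gives $\bar e_i/(1-\bar e_i)=\bar\lambda_i/\bar\lambda_{i-1}$ for the true optimum, and since $\tilde e_i\equiv 1/2$ this would force $\tilde\lambda_i=\tilde\lambda_{i-1}$ for all $i$, contradicting $\tilde\lambda_0 = c(n)/2 \neq c(n) = \tilde\lambda_1$.

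The main obstacle is bookkeeping rather than conceptual: correctly matching the index shifts between $B(\lambda)$'s entries, the Perron vector components $v_{i+1},v_{i+2}$, and the $\lambda_i$ in the sensitivity formula \eqref{eq:derri}, and verifying the elementary monotonicity inequality for $\sin(j\pi/(n+3))$ that separates the boundary sensitivity from the bulk one when $n>1$ (the case $n=1$ being genuinely different, as Example~\ref{exa:n1} shows $1_2$ \emph{is} optimal). I expect the cleanest write-up to bypass \eqref{eq:derri} entirely and argue from the already-established structural identities \eqref{eq:lam_mono} (for $1_{n+1}$) and \eqref{eq:ratio} together with \eqref{eq:eis_nonop} (for $\tilde\lambda$), reducing the whole lemma to two one-line contradictions.
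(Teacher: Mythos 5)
Your primary route is essentially the paper's: show that the stationarity condition~\eqref{eqT:Lagrange} fails at each candidate by evaluating the sensitivity formula~\eqref{eq:derri} with the explicit Perron vectors. The paper compares $s_0$ to $s_1$ directly, finding $s_0/s_1=\sin(\pi/(n+3))/\sin(3\pi/(n+3))<1$ at $1_{n+1}$ and $s_0/s_1=2$ at $\tilde\lambda$, which is slightly tidier bookkeeping than your comparison of $s_0$ with $s_{\lfloor n/2\rfloor}$, but the idea and the required inputs are identical.

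Your proposed shortcut is a genuinely different route: for $1_{n+1}$, the strict monotonicity~\eqref{eq:lam_mono} gives $\bar\lambda_0<\bar\lambda_1$ for any $n>1$ (since $\lfloor n/2\rfloor\ge1$), which an all-ones vector cannot satisfy; for $\tilde\lambda$, combining~\eqref{eq:ratio} with~\eqref{eq:eis_nonop} ($\tilde e_i\equiv1/2$) would force $\tilde\lambda_i=\tilde\lambda_{i-1}$ for all $i$, contradicting $\tilde\lambda_0\neq\tilde\lambda_1$. Both one-liners are correct, and for $\tilde\lambda$ the \eqref{eq:ratio} argument is indeed needed (for $n=2$, $\tilde\lambda$ does satisfy~\eqref{eq:lam_mono}, so monotonicity alone would not work there). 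The trade-off is that this alternative leans entirely on the cited structural theorems of~\cite{min_spring}, whereas the paper's calculation via~\eqref{eqT:Lagrange} uses only the first-order condition and explicit eigenvector formulas, keeping the argument self-contained within the present development. Either version is acceptable.
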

\begin{proof}
In both cases we show that condition~\eqref{eqT:Lagrange} for the minimizer is violated. For~$\lambda=1_{n+1}$ the matrix~$B(1_{n+1})$ is an~$(n+2)\times (n+2)$ tri-diagonal Toeplitz matrix and it is known~\cite{book_toeplitz} that   $\sigma(B(1_{n+1}))=  2 \cos( \frac{\pi}{n+3})$, 
and that
the corresponding normalized  eigenvector~$v\in\R^{n+2}_{>0}$ is given by 
\[
v_i= \sqrt{\frac{2}{n+3}} \sin(\frac{i \pi}{n+3}), \quad i=1,\dots, n+2. 
\]
Using the formula for the sensitivities~\eqref{eq:desf_sense} gives
\begin{align*}
    \frac{s_0(\lambda)}{s_1(\lambda)} & = \frac{v_1}{v_3} \\
    &=\frac{ \sin(\pi/(n+\textcolor{blue}{3})) }{\sin(3\pi/(n+\textcolor{blue}{3}))}\\&<1, 
\end{align*}
for $n>1$. Hence~$s_0(\lambda)<s_1(\lambda)$ which contradicts~\eqref{eqT:Lagrange}.

Similarly we compute for $n>1$ that
\[
    \frac{s_0(\tilde \lambda)}{s_1(\tilde \lambda)}  = \frac{\tilde v_1}{\tilde v_3} 
    \frac{\tilde \lambda_1^{3/2}}{\tilde \lambda_0^{3/2}} =2 \neq 1,
 \]
 which proves the claim.
%
\end{proof}


The next result describes   a   symmetry property   for the Perron eigenvector~$\bar v$.
\begin{Lemma}
    Let~$\bar v\in\R^{n+2}_{>0}$ be 
    the  
    normalized  Perron eigenvector of~$B(\bar \lambda)$. Then
    \be\label{eq:symm_in_v}
\bar v_i=\bar v_{n+3-i}, \quad i=1,\dots,n+2.
    \ee
\end{Lemma}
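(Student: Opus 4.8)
The plan is to exploit the symmetry already established for the optimal rates in Eq.~\eqref{eq:symm}, namely $\bar\lambda_i=\bar\lambda_{n-i}$ for all $i=0,\dots,n$, and to transfer this symmetry to the Perron eigenvector via the structure of $B(\bar\lambda)$. Let $J\in\R^{(n+2)\times(n+2)}$ be the reversal (anti-identity) matrix with $J_{ij}=1$ iff $i+j=n+3$. The first step is to observe that $B(\bar\lambda)$ is \emph{persymmetric}, i.e.\ invariant under the flip $A\mapsto JAJ$: indeed $JB(\bar\lambda)J$ is again tri-diagonal with off-diagonal entries $\bar\lambda_n^{-1/2},\bar\lambda_{n-1}^{-1/2},\dots,\bar\lambda_0^{-1/2}$ read from top to bottom, and by the symmetry $\bar\lambda_i=\bar\lambda_{n-i}$ this list coincides with $\bar\lambda_0^{-1/2},\dots,\bar\lambda_n^{-1/2}$, so $JB(\bar\lambda)J=B(\bar\lambda)$.

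The second step is the standard uniqueness argument for the Perron root. Since $B(\bar\lambda)$ is symmetric, nonnegative, and irreducible, $\bar\sigma=\sigma(B(\bar\lambda))$ is a simple eigenvalue with a one-dimensional eigenspace, spanned by the positive normalized vector $\bar v$. From $B(\bar\lambda)\bar v=\bar\sigma\bar v$ and $JB(\bar\lambda)J=B(\bar\lambda)$ we get $B(\bar\lambda)(J\bar v)=J B(\bar\lambda)\bar v=\bar\sigma (J\bar v)$, so $J\bar v$ is also an eigenvector for $\bar\sigma$. Moreover $J\bar v$ has positive entries (it is just $\bar v$ with its entries reversed) and $\|J\bar v\|=\|\bar v\|$, so $J\bar v$ is a normalized Perron eigenvector. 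By simplicity of $\bar\sigma$ and the normalization, $J\bar v=\bar v$, which is exactly $\bar v_i=\bar v_{n+3-i}$ for $i=1,\dots,n+2$, i.e.\ Eq.~\eqref{eq:symm_in_v}.

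There is really no serious obstacle here; the only point that needs a line of care is checking that $JB(\bar\lambda)J=B(\bar\lambda)$ requires the optimality of $\bar\lambda$ (through the symmetry \eqref{eq:symm}) and is not true for a generic $\lambda$. An alternative, equally short route avoids invoking \eqref{eq:symm} from \cite{min_spring} and instead derives both \eqref{eq:symm} and \eqref{eq:symm_in_v} together: one notes that $\lambda\mapsto B(\lambda)$ composed with the index reversal sends Problem~\ref{prob:main_prob} to itself (the constraint \eqref{eq:constri_lam} is reversal-invariant), so by \emph{uniqueness} of the optimizer $\bar\lambda$ must be reversal-symmetric, and then the eigenvector identity follows as above. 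I would present the first version, citing \eqref{eq:symm}, since that keeps the argument self-contained within the paper's established facts and isolates the single new observation — persymmetry of $B(\bar\lambda)$ — cleanly.
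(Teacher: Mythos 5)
Your proof is correct and takes essentially the same approach as the paper: the paper's reflection matrix $M_R$ is exactly your anti-identity $J$, and both arguments establish commutation/persymmetry of $B(\bar\lambda)$ via Eq.~\eqref{eq:symm} and then invoke simplicity of the Perron root to force $J\bar v=\bar v$.
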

\begin{proof}
Consider the~$(n+2)\times(n+2)$ reflection matrix~$M_R:=\begin{bmatrix}
    0&0&0&\dots&0&1\\
    0&0&0&\dots&1&0\\
   &&& \vdots\\
    1&0&0&\dots&0&0 
\end{bmatrix}$. Then~\eqref{eq:symm} implies that $M_RB(\bar \lambda)=B(\bar\lambda) M_R$. Therefore,~$M_R \bar v$ is also an eigenvector of~$B(\bar\lambda)$ corresponding to the Perron root~$\bar\sigma$. Since the Perron eigenvector is unique up to scaling,~$\bar v=M_R \bar v$, and this yields~\eqref{eq:symm_in_v}. 
\end{proof}

\subsection{Proof of Thm.~\ref{thm:mainj}}
The proof   includes several steps.
The starting point
is the spectral equation
\be\label{eq:sps}
B(\bar \lambda)\bar v =\sigma(B(\bar \lambda))\bar v, 
\ee
with
$\bar\lambda = 
\begin{bmatrix}
    \bar \lambda_0&\dots& \bar \lambda_n
\end{bmatrix}^\top>0$, 
and where~$\bar v$ is the positive Perron eigenvector,  normalized such that~$\bar v^\top\bar v=1$. 
 
 Define a vector~$\bar \omega=\bar\omega(n)$  by 
 \be\label{eq:def:wi}
 \bar
 \omega_i:=(\bar \lambda_i)^{-1/2},\quad i=0,\dots,n.
 \ee 
Then writing~\eqref{eq:sps}  explicitly yields 
\be \label{eq:av}
\bar \omega_{i-2} \bar  v_{i-1} +\bar  \omega_{i-1} \bar  v_{i+1} = \bar \sigma \; \bar  v_{i} ,\quad  i=1,\dots, n+2, 
\ee
with boundary conditions~$\bar v_0:=0$ and~$\bar v_{n+3}:=0$.
The analysis is based on converting this set of equations
into a ``universal recursion'' that depends on a single parameter. Combining  the analysis of this recursion with\TK{~\eqref{eqT:Lagrange}},~\eqref{eq:symm} and~\eqref{eq:lam_mono} yields the results in Section~\ref{sec:main}.

We start with relation~\eqref{eqT:Lagrange}
that at the optimal solution all the sensitivities are equal. This yields the following result. 
\begin{Lemma}\label{lemma:sense_eq}
Fix~$n\in\mathbb N$. Define a vector~$\bar \mu=\bar \mu(n)$ by
\begin{align}\label{eq:def_mu}
        \bar \mu_i:=\bar v_{i+1}\bar v_{i+2}\bar \omega_i^3,\quad , i=0,\dots,n.
\end{align}
Then
\be\label{eq:all_mui_equal}
\bar \mu_0=\dots=\bar \mu_n= \bar \sigma \bar v_1^2/\bar\lambda_0.
\ee
\end{Lemma}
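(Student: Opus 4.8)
The plan is to prove that all the quantities $\bar\mu_i := \bar v_{i+1}\bar v_{i+2}\bar\omega_i^3$ are equal, and that the common value is $\bar\sigma \bar v_1^2/\bar\lambda_0$. The first half follows essentially directly from the Lagrange condition~\eqref{eqT:Lagrange}: recall that the sensitivities are $s_i(\bar\lambda) = 2R^{3/2}\bar v_{i+1}\bar v_{i+2}/\bar\lambda_i^{3/2}$ by~\eqref{eq:derri}, where $R = R(\bar\lambda) = \bar\sigma^{-2}$ by~\eqref{eq:R_spect}. Since $\bar\lambda_i^{-3/2} = \bar\omega_i^3$ by the definition~\eqref{eq:def:wi}, we have $s_i(\bar\lambda) = 2R^{3/2}\bar\mu_i$. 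The factor $2R^{3/2}$ is independent of $i$ and strictly positive, so $s_0(\bar\lambda) = \cdots = s_n(\bar\lambda)$ immediately gives $\bar\mu_0 = \cdots = \bar\mu_n$.

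It remains to identify the common value. Here I would use the spectral recursion~\eqref{eq:av} at the left boundary. Setting $i=1$ in~\eqref{eq:av} and using $\bar v_0 = 0$ yields $\bar\omega_0 \bar v_2 = \bar\sigma\,\bar v_1$, hence $\bar v_2 = \bar\sigma\,\bar v_1/\bar\omega_0$. Substituting into the definition of $\bar\mu_0 = \bar v_1 \bar v_2 \bar\omega_0^3$ gives $\bar\mu_0 = \bar v_1 \cdot (\bar\sigma \bar v_1/\bar\omega_0)\cdot \bar\omega_0^3 = \bar\sigma\,\bar v_1^2\,\bar\omega_0^2$. Finally $\bar\omega_0^2 = \bar\lambda_0^{-1}$ by~\eqref{eq:def:wi}, so $\bar\mu_0 = \bar\sigma\,\bar v_1^2/\bar\lambda_0$, which is the claimed common value by the equality just established.

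There is essentially no serious obstacle here; the lemma is a bookkeeping consequence of previously recorded facts (the Lagrange/sensitivity identity, the spectral equation, and the definitions of $\bar\omega$ and $R$). The only mild care needed is to invoke $s_i = \partial R/\partial\lambda_i$ together with the fact that the active constraint $\sum_i \lambda_i = n+1$ has constant gradient $1_{n+1}$, so that the KKT stationarity condition forces all $s_i(\bar\lambda)$ to coincide — but this is exactly what was already argued in the text leading to~\eqref{eqT:Lagrange}, so it can simply be cited. I would also note in passing that all $\bar v_j > 0$ and all $\bar\omega_i > 0$, so every $\bar\mu_i$ is strictly positive, which is consistent with the right-hand side $\bar\sigma\,\bar v_1^2/\bar\lambda_0 > 0$ and will be convenient when this identity is used later to derive the recursion.
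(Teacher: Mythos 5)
Your proof is correct and follows essentially the same route as the paper: you invoke the Lagrange stationarity condition to show the $\bar\mu_i$ are equal (the paper states this more tersely, but the content is identical), and then use the spectral equation~\eqref{eq:av} at $i=1$ to evaluate $\bar\mu_0$ explicitly. Nothing differs beyond the level of spelled-out detail.
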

\begin{proof}
At   the optimal solution, all the sensitivities in~\eqref{eq:derri} are equal, that is,  all the~$\bar \mu_i$s are equal. 
By~\eqref{eq:av} with~$i=1$, we have
\[
 (\bar  \lambda_{0})^{-1/2} \bar  v_{2} = \bar \sigma \bar v_1,
\]
so~$\bar v_2=   \bar \sigma\bar v_1 (\bar\lambda_0)^{1/2} $. 
Now, 
$\bar \mu_0=\bar v_1\bar v_2(\bar\lambda_0)^{-3/2} = 
\bar v_1  (\bar \sigma \bar v_1 (\bar\lambda_0)^{1/2} ) (\bar\lambda_0)^{-3/2}$,  
 and   this completes the proof. 
\end{proof}

\subsection{Simplifying Eq.~\eqref{eq:sps}}\label{subsec:perro}

Define   a vector~$\bar z=\bar z(n)$  by 
\be\label{eq:def_z_i}
\bar z_i:=\bar v_i^{2/3},\quad i=1,\dots,n+2.
\ee
Then~\eqref{eq:av} becomes
\begin{align}\label{eq:obrt}
\bar \omega_{i-2}   (\bar  z_{i-1})^{3/2}  +\bar  \omega_{i-1}  (\bar  z_{i+1})^{3/2}=\bar \sigma   (\bar  z_{i})^{3/2}  ,\quad i=1,\dots, n+2,
\end{align}
with
\[
\bar z_0 =\bar z_{n+3}=0.
\]
By Lemma~\ref{lemma:sense_eq},
\begin{align*}
\bar \omega_i &= (\frac{\bar \mu_0}{\bar v_{i+1} \bar v_{i+2} })^{1/3} = \frac{\bar \mu_0^{1/3}}{\bar z_{i+1}^{1/2}\bar z_{i+2}^{1/2}} ,
\end{align*}
and substituting
this in~\eqref{eq:obrt} gives  
 \be\label{eq:recur}
\bar z_{i-1}+ \bar z_{i+1}=   (\bar  z_{i})^{2}
\bar 
\kappa, \quad i=1,\dots,n+2,
\ee
where we defined~$\bar \kappa=\bar \kappa
(n)$ by
\[\bar 
\kappa:=\bar \mu_0^{-1/3}\bar \sigma  =
 \bar\sigma^{2/3} \bar\lambda_0^{1/3}\bar v_1^{-2/3}   >0 .
\]
Note that  since the~$\bar v_i$s are positive and~$(\bar{ v})^\top \bar v =1$, 
\be\label{eq:z_compa}
0<\bar z_i<  1,  
\quad i=1,\dots, n+2,
\ee
where this bound holds for any value~$n$. 

Define~$\bar r=\bar r(n)$ by
\be\label{eq:simpt}
\bar r:=\kappa \bar z_1
 =
 \bar\sigma^{2/3} \bar\lambda_0^{1/3}. 
\ee
Eq.~\eqref{eq:recur} with~$i=1$ yields~$\frac{\bar z_2}{\bar z_1}=\bar r$. Now~\eqref{eq:recur} with~$i=2$ 
gives~$\frac{\bar z_3}{\bar z_1} =\bar  r^3-1 $. We conclude  that
\be\label{eq:lower_bound_r}
\bar r(n)>1 \text{ for any } n.
\ee

Define a vector~$\bar a=\bar a(n)$ by 
\be\label{eq:def_ai_s}
\bar a_i:=\bar z_i/\bar z_1
=\left ( {\bar v_i}/{\bar v_1} \right  )^{2/3},\quad
i=1,\dots,n+2.
\ee
Then~$\bar a_i>0$ for any~$i=1,\dots,n+2$, and~\eqref{eq:recur} gives 
$
 \bar a_{i-1} \bar z_1+\bar a_{i+1} \bar z_1=  (\bar a_i \bar z_1)^2\bar \kappa
$, that is,
\be\label{eq:rec_ai}
 \bar a_{i-1}+\bar a_{i+1}=\bar r\bar a_i^2,\quad i=1,\dots, n+2,
\ee
with~$\bar a_0=\bar a_{n+3}=0$, and~$\bar a_1= 1$.
Also,~\eqref{eq:symm_in_v} implies that 
\be\label{eq:symm_in_ais}
\bar a_i = \bar a_{n+3-i} ,\quad i=1,\dots,n+2,  
\ee
so in particular~$\bar a_{n+2}=\bar a_1=1$. 

Note that we  can write the solution of the recursion~\eqref{eq:rec_ai}
in terms of the single unknown~$\bar r$:
\begin{align}\label{eq:iter_r}
    \bar a_0 & =0 ,\nonumber \\
    \bar a_1 &= 1,\nonumber\\
    \bar a_2 & =\bar  r,\\
    \bar a_3 &   =\bar r^3-1,\nonumber\\
    \bar a_4 &    =\bar r^7-2 \bar r^4,\nonumber\\
     \bar a_5 &     
   =\bar   r^{15}-4  \bar r^{12}+4  \bar r^9-\bar r^3+1, \nonumber
\end{align}
and so on.
Using the fact that~$\bar a_4>0$ we get that
\be\label{lower_bound_r}
\bar r(n)>2^{1/3}\approx 1.2599\text{ for any } \TK{n > 1}. 
\ee

Since~$\bar r= (\bar\sigma^{2} \bar\lambda_0)^{1/3}
$, this implies that~$\bar\sigma^2\bar\lambda_0 > 2$.

 \begin{example}
    For low orders of~$n$, the symmetry condition~\eqref{eq:symm_in_ais}
     allows to explicitly solve Problem~\ref{prob:main_prob}. For example,
    for~$n=2$, Eq.~\eqref{eq:symm_in_ais} 
    gives~$ \bar a_2=\bar a_3 $ and~\eqref{eq:iter_r} implies that~$r(2)$ is the real root of~$r^3-r-1=0$ so~$r(2)\approx 1.3247$.
    As another example, for~$n=3$ Eq.~\eqref{eq:symm_in_ais}  gives 
    $\bar a_2=\bar a_4$ and~\eqref{eq:iter_r} 
    gives~$r(3)=(1+\sqrt{2})^{1/3}\approx 1.3415 $. 
\end{example}

The next result provides  an  upper bound for~$ \bar r$.
\begin{Lemma}
    For any~$n\in\mathbb N $ with $n>1$, we have
    \[
 \bar     r(n)<\left  (2\cos(\frac{\pi}{n+3}) \right )  ^{2/3}.
    \]
\end{Lemma}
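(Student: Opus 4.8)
The key observation is that we want to compare the "universal recursion" parameter $\bar r$ attached to the optimal solution with the quantity $(2\cos(\pi/(n+3)))^{2/3}$, and $2\cos(\pi/(n+3)) = \sigma(B(1_{n+1}))$ is the Perron root of the all-ones solution. So the statement is essentially equivalent to a comparison between $\bar\sigma^{2/3}\bar\lambda_0^{1/3} = \bar r$ and $\sigma(B(1_{n+1}))^{2/3}$. My plan is to set up a \emph{linear} auxiliary recursion that linearizes the nonlinear recursion \eqref{eq:rec_ai}, solve it explicitly in terms of cosines, and then use a monotonicity / comparison argument between the nonlinear iterates $\bar a_i$ and the linear iterates to force the strict inequality on $\bar r$.

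\medskip

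\textbf{Step 1: a linear companion recursion.} For a parameter $t>0$ consider the linear three-term recursion $b_{i+1} = t\, b_i - b_{i-1}$ with $b_0 = 0$, $b_1 = 1$. Its solution is $b_i = \sin(i\theta)/\sin\theta$ where $t = 2\cos\theta$ (for $t<2$), and the first time $b_i$ vanishes (other than $i=0$) is at $i\theta = \pi$, i.e.\ when $t = 2\cos(\pi/m)$ the index $i=m$ gives $b_m = 0$. Here we want the constraint $\bar a_{n+3} = 0$ with $\bar a_1 = 1$, $\bar a_0 = 0$, so the relevant linear model has its second zero at index $n+3$, forcing $t = 2\cos(\pi/(n+3)) = \sigma(B(1_{n+1}))$.

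\medskip

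\textbf{Step 2: convexity / sign comparison.} The nonlinear recursion is $\bar a_{i+1} = \bar r\,\bar a_i^2 - \bar a_{i-1}$. Since $\bar z_i = \bar v_i^{2/3} < 1$ and $\bar a_i = \bar z_i/\bar z_1$, we have control on the size of the $\bar a_i$; in particular, in the increasing part (indices $0$ through $\lfloor(n+2)/2\rfloor$, using \eqref{eq:symm_in_ais} and the monotonicity coming from \eqref{eq:lam_mono}), we would like to show $\bar r\,\bar a_i^2 \le \bar r\,\bar a_i$ fails in general — so instead I would compare term-by-term with the linear recursion driven by $t := \bar r\,\bar a_m$ at the appropriate index, or more cleanly: show by induction that if $\bar a_i$ solves the nonlinear recursion with $\bar a_i \le 1$ on the relevant range, then $\bar a_i \ge b_i(t)$ for $t$ slightly below the critical value, and the forced vanishing at $i = n+3$ then pushes the effective "$t$" — hence $\bar r$ — strictly below $(2\cos(\pi/(n+3)))$ in the appropriate power. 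The cleanest route: define $t_i := (\bar a_{i-1}+\bar a_{i+1})/\bar a_i = \bar r\,\bar a_i$. Since $\bar a_i < 1$ on $i=1,\dots$ up to the middle (strict because $\bar v_i/\bar v_1$ ratios are strictly below $1$ away from $i=1$, except we must handle $\bar a_2 = \bar r > 1$ carefully — actually $\bar a_2 = \bar r$ can exceed $1$), one reduces to a discrete Sturm-type comparison: the sequence $\bar a_i$ satisfies $\bar a_{i+1} - t_i \bar a_i + \bar a_{i-1} = 0$ with $t_i = \bar r \bar a_i$, and I must show the "average" of the $t_i$ is strictly less than $2\cos(\pi/(n+3))$, then invoke that a linear recursion with a smaller constant coefficient reaches its second zero \emph{later} than index $n+3$ — contradiction unless $\bar r$ is strictly smaller.

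\medskip

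\textbf{Step 3: assembling the bound on $\bar r$.} The vanishing condition $\bar a_{n+3} = 0$ together with $\bar a_i > 0$ for $1 \le i \le n+2$ is exactly the statement that the linear-type operator $L(\bar a)_i := \bar a_{i+1} - \bar r\bar a_i^2 + \bar a_{i-1}$... here I will use the standard fact (discrete Sturm comparison / the fact that the first generalized eigenvalue is characterized by positivity of the eigenvector) that among solutions with prescribed boundary zeros at $0$ and $n+3$, positivity of all intermediate entries picks out the principal one; comparing the nonlinear scheme to the linear one with constant coefficient $2\cos(\pi/(n+3))$ and using $\bar a_i \le 1$ (with strictness somewhere) yields $\bar r \le 2\cos(\pi/(n+3)) $ raised to the $2/3$ via $\bar r = (\bar\sigma^2\bar\lambda_0)^{1/3}$ and $\bar\sigma \le \sigma(B(1_{n+1})) = 2\cos(\pi/(n+3))$ (optimality of $\bar\lambda$) combined with $\bar\lambda_0 < 1$... wait — $\bar\lambda_0 < 1$ is precisely what \eqref{eq:lam_mono}+\eqref{eq:symm} and the budget constraint give (the smallest rate is below the average $1$), so $\bar r^3 = \bar\sigma^2\bar\lambda_0 < \bar\sigma^2 \le (2\cos(\pi/(n+3)))^2$, giving $\bar r < (2\cos(\pi/(n+3)))^{2/3}$ directly.

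\medskip

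So in fact the \textbf{core of the proof is short}: (i) $\bar\sigma \le \sigma(B(1_{n+1})) = 2\cos(\pi/(n+3))$ by optimality since $1_{n+1}$ is feasible (Remark~\ref{remark:sol_all_ones}); (ii) $\bar\lambda_0 < 1$ because by \eqref{eq:symm} and \eqref{eq:lam_mono} $\bar\lambda_0$ is the \emph{strictly} smallest coordinate of $\bar\lambda$ while the coordinates average to $1$ (from the active constraint $\frac1{n+1}\sum\bar\lambda_i = 1$), using $n>1$ so the inequalities in \eqref{eq:lam_mono} are non-trivial; (iii) therefore $\bar r^3 = \bar\sigma^{2}\bar\lambda_0 < (2\cos(\pi/(n+3)))^2 \cdot 1$, and taking cube roots gives the claim. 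The main thing to be careful about — and the step I'd expect to need the most attention — is justifying $\bar\lambda_0 < 1$ rigorously for all $n>1$: one needs that $\bar\lambda$ is not the constant vector (Lemma~\ref{lem:allonesnonopt}) and that the minimum coordinate of a non-constant vector with average $1$ is strictly below $1$; combined with \eqref{eq:symm}–\eqref{eq:lam_mono} identifying $\bar\lambda_0$ as that minimum. I would write the proof in this streamlined three-line form rather than via the Sturm-comparison machinery, keeping the latter only as intuition for why the bound is natural.
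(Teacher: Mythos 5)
Your final ``streamlined three-line'' argument is essentially identical to the paper's proof: establish $\bar\lambda_0<1$ via \eqref{eq:symm}, \eqref{eq:lam_mono}, the active budget constraint, and Lemma~\ref{lem:allonesnonopt}, then combine $\bar r^3=\bar\sigma^2\bar\lambda_0<\bar\sigma^2$ with $\bar\sigma<\sigma(B(1_{n+1}))=2\cos(\pi/(n+3))$ and take cube roots. The Sturm-comparison / linear-recursion scaffolding in Steps~1--2 is never actually used and can simply be dropped.
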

In particular, since $\bar r(1)=2^{1/3}$ (see Example~\ref{exa:n1}),  this implies that
\be\label{eq:uppr_r}
 \bar  r(n)<2^{2/3}\approx  1.5874 \text{ for any } n.
 \ee
 \begin{proof}
     If~$\bar\lambda_0 \geq 1$ then~\eqref{eq:lam_mono} together with~\eqref{eq:symm} implies that~$\lambda_i \geq 1$ for all~$i=0,\dots,n$, and
     the constraint~\eqref{eq:constri_lam} implies that~$\bar\lambda= 1_{n+1}$ and this 
     contradicts the statement of Lemma~\ref{lem:allonesnonopt}.
       Thus,~$\bar \lambda_ 0 <  1$, and combining 
       this with~\eqref{eq:simpt} gives~$ \bar r<\bar\sigma^{2/3}$.  Using the definition of~$\bar \sigma$ and~\eqref{eq:lam_mx_mat-ones}
       gives $
     \bar\sigma<\sigma(B(1_{n+1}))=  2 \cos( \frac{\pi}{n+3})$, and this completes the proof.
 \end{proof}

Before further analyzing the recursion~\eqref{eq:rec_ai}, it is 
useful to relate it back to the optimal~$\bar\lambda_i$s. 
Note that  
\begin{align}\label{eq:lamb_as_ais}
\bar \lambda_i & =(\bar  \omega _i)^{-2} \nonumber\\
&=( \frac{\bar \mu_0^{1/3}}{\bar z_{i+1}^{1/2}\bar z_{i+2}^{1/2}} )^{-2} \nonumber\\
&=\bar \mu_0^{-2/3} \bar z_{i+1} \bar z_{i+2}\nonumber\\
&= ( \bar\sigma \bar v_1^2 / \bar \lambda_0
)^{-2/3} \bar z_1^2 
\bar a_{i+1} \bar a_{i+2}\nonumber\\
&=( \bar\sigma \bar v_1^2 / \bar \lambda_0
)^{-2/3} \bar v_1^{4/3} 
\bar a_{i+1} \bar a_{i+2}\nonumber\\
&=( \bar \lambda_0/
\bar\sigma    
)^{2/3}  
\bar a_{i+1} \bar a_{i+2} .  
\end{align}


Combining~\eqref{eq:lamb_as_ais}
  with~\eqref{eq:lam_mono} yields 
the following monotonicity result. 
\begin{Lemma}\label{lemma:mono_ai_s}
  Fix~$n\in\mathbb N$.   If~$n=4k$ or~$n=4k+1$ then
\begin{align*}
&\bar a_1<\bar  a_3<\bar a_5<\dots< \bar a_{2k+1},  \nonumber \\
&\bar a_2<\bar a_4<\bar a_6<\dots<   \bar a_{2k+2},  
\end{align*}
    If~$n=4k+2$ or~$n=4k+3$ then
\begin{align*}
&\bar a_1<\bar  a_3<\bar a_5<\dots< \bar a_{2k+3},  \nonumber \\
&\bar a_2<\bar a_4<\bar a_6<\dots<   \bar a_{2k+2}.  
\end{align*}
\end{Lemma}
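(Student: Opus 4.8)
The plan is to derive the claimed chains of inequalities among the $\bar a_i$'s by translating them, via \eqref{eq:lamb_as_ais}, into inequalities among the optimal rates $\bar \lambda_i$, for which we already know the monotonicity \eqref{eq:lam_mono} and the symmetry \eqref{eq:symm}. The identity \eqref{eq:lamb_as_ais} says $\bar\lambda_i = (\bar\lambda_0/\bar\sigma)^{2/3}\,\bar a_{i+1}\bar a_{i+2}$, so writing $c:=(\bar\lambda_0/\bar\sigma)^{2/3}>0$ we have $\bar\lambda_i = c\,\bar a_{i+1}\bar a_{i+2}$ for $i=0,\dots,n$. Consecutive rates therefore satisfy $\bar\lambda_{i+1}/\bar\lambda_i = \bar a_{i+3}/\bar a_{i+1}$, i.e.\ for $i=0,\dots,n-1$,
\[
\bar\lambda_i<\bar\lambda_{i+1}\iff \bar a_{i+1}<\bar a_{i+3}.
\]
This is the key bridge: a step-by-two comparison of the $\bar a$'s is \emph{equivalent} to a step-by-one comparison of the $\bar\lambda$'s.

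Next I would simply read off which $\bar\lambda$-inequalities \eqref{eq:lam_mono} guarantees and unfold them. By \eqref{eq:lam_mono}, $\bar\lambda_0<\bar\lambda_1<\dots<\bar\lambda_{\lfloor n/2\rfloor}$. Using the equivalence above with $i=0,1,\dots,\lfloor n/2\rfloor-1$ gives
\[
\bar a_1<\bar a_3<\bar a_5<\cdots \quad\text{and}\quad \bar a_2<\bar a_4<\bar a_6<\cdots,
\]
where the odd-indexed chain comes from the even values of $i$ and the even-indexed chain from the odd values of $i$; the largest index reached in each chain depends on the parity of $n$ and on $\lfloor n/2\rfloor$. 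It then remains to bookkeep the endpoints carefully in the four residue classes $n\in\{4k,4k+1,4k+2,4k+3\}$: when $n=4k$ we have $\lfloor n/2\rfloor=2k$, so the rate inequalities run up to $\bar\lambda_{2k-1}<\bar\lambda_{2k}$, which via $i=2k-1$ gives $\bar a_{2k}<\bar a_{2k+2}$ (even chain ends at $\bar a_{2k+2}$) and via $i=2k-2$ gives $\bar a_{2k-1}<\bar a_{2k+1}$ (odd chain ends at $\bar a_{2k+1}$); the other three cases are identical arithmetic with $\lfloor n/2\rfloor$ equal to $2k$, $2k+1$, $2k+1$ respectively, producing the stated upper endpoints $\bar a_{2k+1},\bar a_{2k+2}$ in the first case and $\bar a_{2k+3},\bar a_{2k+2}$ in the second.

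I do not anticipate a genuine obstacle here; the only thing to be careful about is the index bookkeeping at the top of each chain, making sure the last $\bar a$ appearing matches the last available strict inequality in \eqref{eq:lam_mono} and that one does not accidentally claim a comparison between $\bar a_{2k+1}$ and $\bar a_{2k+2}$ (which would require $\bar\lambda_{2k}<\bar\lambda_{2k+1}$, not supplied by \eqref{eq:lam_mono}, only equality in the odd case via \eqref{eq:symm}). One subtlety worth a remark: the factor $c=(\bar\lambda_0/\bar\sigma)^{2/3}$ is a fixed positive constant independent of $i$, so it genuinely cancels in the ratio $\bar\lambda_{i+1}/\bar\lambda_i$ and the equivalence is clean; no positivity issues arise because all $\bar a_i>0$ for $i=1,\dots,n+2$ as noted after \eqref{eq:def_ai_s}. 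Thus the proof is essentially: state the ratio identity $\bar\lambda_{i+1}/\bar\lambda_i=\bar a_{i+3}/\bar a_{i+1}$, invoke \eqref{eq:lam_mono}, and split into the four parity cases to record the endpoints.
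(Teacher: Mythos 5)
Your proposal is correct and takes precisely the approach the paper intends: the paper does not spell out a proof for this lemma but simply states that it follows by ``combining~\eqref{eq:lamb_as_ais} with~\eqref{eq:lam_mono},'' which is exactly your ratio identity $\bar\lambda_{i+1}/\bar\lambda_i = \bar a_{i+3}/\bar a_{i+1}$ followed by unfolding \eqref{eq:lam_mono} and bookkeeping the four residue classes. Your endpoint arithmetic in each case checks out.
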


  Note that combining this  lemma  and the fact that~$\bar a_0=0$ 
  and~$\bar a_2=\bar r > 1$
yields
\be\label{eq:pdi}
\bar a_{i+1}>\bar a_{i-1}, \quad i=1,\dots, \lfloor n/2\rfloor+1.
\ee

Note that~$\bar a=\bar a(n)$, as~$ \bar r= \bar r(n)$. Thus, for any~$n$ we have in general 
different sequences in Lemma~\ref{lemma:mono_ai_s}.

The next step is to
  express  the recursion~\eqref{eq:rec_ai} as a map from~$\R^2$ to~$\R^2$.

\subsection{Hyperbolic map formulation}
Define a parameter-dependent mapping $F_s:\R^2 
\to \R^2$ by 
\[
F \left ( \begin{bmatrix} u\\v\end{bmatrix} \right ) : =
\begin{bmatrix}  
 s u^2-v\\u\end{bmatrix}.
\]
Then~\eqref{eq:rec_ai} becomes the discrete-time iteration 
\be\label{eq:fmap}
   \begin{bmatrix}
       \bar a_{i+1} \\\bar a_i
   \end{bmatrix}     =
   F_{\bar r }\left ( \begin{bmatrix}
       \bar a_i\\\bar a_{i-1}
   \end{bmatrix}\right ),
\ee
so
$\begin{bmatrix}
\bar     a_{i+1}\\
\bar a_i
\end{bmatrix}
$
is an orbit of $F_{\bar r}$ emanating  from $\begin{bmatrix}
 \bar a_1\\
 \bar a_0
\end{bmatrix}=\begin{bmatrix}
  1\\
  0
\end{bmatrix}$,
and ending at $\begin{bmatrix}
 \bar a_{n+3}\\
 \bar a_{n+2}
\end{bmatrix}=\begin{bmatrix}
  0\\ 1
\end{bmatrix}$.

We now analyze the asymptotic behavior of~\eqref{eq:fmap}. 
The map 
$F_s$ admits two fixed points for any~$s>0$. The first is
$\begin{bmatrix}
  0\\0
\end{bmatrix}
$, which is not relevant in our context 
because if the~$\bar a_i$s
converge to zero then the~$\bar \lambda_i$s converge to zero and this contradicts  
the argument in Remark~\ref{rem:uniq}
.  The second fixed point of~$F_s$ is
\be\label{eq:zstar}
 \begin{bmatrix}
    2/  s \\2 / s
\end{bmatrix}.
\ee
The Jacobian of~$F$ at this fixed point is~$
\begin{bmatrix}
4&-1\\1&0
\end{bmatrix}$,
and this has  eigenvalues 
\[
\alpha:= 2 - \sqrt{3}, \quad 
  \alpha^{-1}=2+\sqrt{3}.
\]
and    corresponding 
eigenvectors
\[
\begin{bmatrix}1\\4-\alpha\end{bmatrix},\quad 
\begin{bmatrix}1\\4-\alpha^{-1}\end{bmatrix}.
\]
Note that 
 $0<\alpha<1<\alpha^{-1}$. Thus,~\eqref{eq:zstar} 
 is a  hyperbolic fixed point of~$F$. Note that although~$F_s$ depends on~$s$, the Jacobian at the fixed point and its eigenvalues do not depend on~$s$.


\subsection*{Rate of convergence to the equilibrium}
Let
 \be\label{eq:def_qn}
 \bar  q(n):=\frac{2}{ \bar  r(n)}=\frac{2}{\bar\sigma^{2/3}
  \bar\lambda_0^{1/3} }.
 \ee
 Note that~\eqref{lower_bound_r} and~\eqref{eq:uppr_r} imply
 that
\be\label{eq:ippet_bounq}
  1.2599  \approx 2^{1/3} < \bar q(n)< 2^{2/3}\approx     1.5874\text { for any } n > 1.
\ee

\begin{Remark}\label{remT:Hartman}
 The analysis of the map~$F_s$ suggests that the orbit $\begin{bmatrix}
 \bar a_{i+1}\\
 \bar a_{i}
\end{bmatrix}_i$ of $F_{\bar r}$ emanating  from $\begin{bmatrix}
 \bar a_1\\
 \bar a_0
\end{bmatrix}=\begin{bmatrix}
  1\\
  0
\end{bmatrix}$ approaches the hyperbolic fixed point  $\begin{bmatrix}
  \bar q\\
  \bar q
\end{bmatrix}$ at an exponential rate for~$0 \leq i \leq \lfloor (n+3)/2 \rfloor$ moving close to the stable manifold of the fixed point. For~$i > \lfloor (n+3)/2 \rfloor$ the iterates then move away from the fixed point staying close to the unstable manifold. In fact, one may use the theory of hyperbolic fixed points to prove a result similar to Theorem~\ref{thm:mainj}. As a first result, one obtains that for any~$\beta > \alpha = 2 - \sqrt{3}$ there exists a constant~$C_{\beta, \bar r} > 0$ such that 
\be\label{eqT:Hartman}
\bar q > \bar a_i > \bar q - C_{\beta, \bar r} \beta^i \quad \mbox{ for } \quad 0 \leq i \leq \lfloor (n+3)/2 \rfloor.
\ee
The core of the proof of the lower bound in~\eqref{eqT:Hartman} is the central linearization result in the Hartman-Grobman theory, see e.g.~\cite[Chapter IX]{Hartman1982}. It states that in a suitable neighborhood of a hyperbolic fixed point the dynamics is topologically equivalent to the the dynamics generated by the linear map given by the Jacobian. In our situation this means that there exists a homeomorphism $\Psi_s$ from a neighborhood $U_s$ of the fixed point onto a neighborhood $V_s$ of the origin such that for $x \in U_s$ we have 
\[ 
F_s(x) = \Psi_s^{-1} \Big( \begin{bmatrix}
4&-1\\1&0
\end{bmatrix} \Psi_s (x) \Big) .
\]
Although the dynamics generated by the linear map can easily be analyzed, the map $\Psi_s$ poses some challenges. It cannot be expected to be differentiable or Lipschitz continuous and one has to work with $\gamma$-H\"older norms where~$0 <\gamma <1$ can be chosen arbitrarily. However, the choice of~$\gamma$ has an effect on the size of the neighborhoods $U_s$ and $V_s$. This leads then to~$\beta=(2-\sqrt{3})^{\gamma}$ and a constant~$C_{\beta, \bar r}$ in~\eqref{eqT:Hartman} that is hard to evaluate but surely becomes huge if $\beta$ is chosen close to $2-\sqrt{3}$.

In view of the discussion above it is perhaps remarkable that the identities presented in this paper allow for an elementary proof of 
\[
\bar q > \bar a_i > \bar q - C_{\bar r} \beta^i \quad \mbox{ for } \quad 0 \leq i \leq \lfloor (n+3)/2 \rfloor,
\]
with~$\beta= 1/2$ and $C_{\bar r} = 2 \bar q \ln{\bar q} < 3/2$. This inequality together with relation~\eqref{eq:lamb_as_ais} are the basis for our proof of Theorem~\ref{thm:mainj}.
\end{Remark}

\begin{Lemma}\label{lemma:growth_a_is}
Fix~$n\in\mathbb N$.   Then 
   \be\label{eq:airato}
   \bar a_{i+1} >  ( \bar q)^{\frac{2^{i}-1}{2^{i}}},\quad i=1,\dots,\lfloor n/2 \rfloor   .
   \ee
   Furthermore, if $n > 1$ is odd then~\eqref{eq:airato} holds also for~$i= \lfloor n/2 \rfloor  +1$. 
  \end{Lemma}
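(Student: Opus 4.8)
The plan is to prove the lower bound \eqref{eq:airato} by induction on $i$, exploiting the recursion \eqref{eq:rec_ai} together with the monotonicity facts \eqref{eq:pdi} and the definition $\bar q = 2/\bar r$. First I would record the base case: from \eqref{eq:iter_r} we have $\bar a_2 = \bar r = 2/\bar q$, and since $\bar q < 2^{2/3} < 2$ by \eqref{eq:ippet_bounq}, one checks directly that $\bar a_2 = 2/\bar q > \bar q^{1/2} = \bar q^{(2^1-1)/2^1}$; this is equivalent to $2 > \bar q^{3/2}$, i.e. $\bar q < 2^{2/3}$, which holds. So \eqref{eq:airato} holds for $i=1$.

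For the inductive step I would use the recursion in the form $\bar a_{i+1} = \bar r\, \bar a_i^2 - \bar a_{i-1}$. The key is to bound $\bar a_{i-1}$ from above in terms of $\bar a_i$: by \eqref{eq:pdi} we have $\bar a_{i-1} < \bar a_{i+1}$ for $i$ in the relevant range, but that is circular; instead the right move is to observe that $\bar a_{i-1} < \bar a_i^{?}$ is not immediate, so I would rather bound $\bar a_{i-1}$ crudely. A cleaner route: rewrite the recursion as $\bar a_{i+1} + \bar a_{i-1} = \bar r \bar a_i^2$, so $\bar a_{i+1} = \bar r \bar a_i^2 - \bar a_{i-1} > \bar r \bar a_i^2 - \bar a_i$ once we know $\bar a_{i-1} < \bar a_i$ — but that monotonicity of consecutive terms does NOT hold in general (the even- and odd-indexed subsequences each increase, but $\bar a_1 = 1 < \bar a_2 = \bar r$ while later $\bar a_{i-1}$ vs $\bar a_i$ can go either way near the turnpike where all $\bar a_i \approx \bar q$). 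So instead I would feed in the inductive hypothesis twice: assume $\bar a_i > \bar q^{(2^{i-1}-1)/2^{i-1}}$ and use the (weaker, already established for smaller index) bound on $\bar a_{i-1}$, combined with the uniform lower bound $\bar a_j \ge 1$ and an upper bound of the form $\bar a_{i-1} \le \bar q$ or $\bar a_{i-1} < \bar a_{i+1}$, to close the estimate. Concretely, from \eqref{eq:pdi}, $\bar a_{i-1} < \bar a_{i+1}$, hence $2\bar a_{i+1} > \bar a_{i+1} + \bar a_{i-1} = \bar r \bar a_i^2$, giving $\bar a_{i+1} > \tfrac{\bar r}{2}\bar a_i^2 = \bar a_i^2/\bar q$. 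Then the inductive hypothesis $\bar a_i > \bar q^{(2^{i-1}-1)/2^{i-1}}$ yields
\[
\bar a_{i+1} > \frac{1}{\bar q}\, \bar q^{2\cdot\frac{2^{i-1}-1}{2^{i-1}}} = \bar q^{\,2 - \frac{2}{2^{i-1}} - 1} = \bar q^{\,1 - \frac{1}{2^{i-1}}} = \bar q^{\frac{2^{i}-1}{2^{i}}},
\]
which is exactly \eqref{eq:airato} for index $i$ (shifted), completing the induction.

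The main technical point to get right is the \emph{range of validity} of the inequality $\bar a_{i-1} < \bar a_{i+1}$ used above: \eqref{eq:pdi} guarantees it for $i = 1, \dots, \lfloor n/2 \rfloor + 1$, so the induction producing $\bar a_{i+1} > \bar q^{(2^i-1)/2^i}$ is valid for $i$ up to $\lfloor n/2 \rfloor$, and the ``furthermore'' clause for odd $n$ exploits that in the odd case $n = 2k+1$ we additionally have $\bar a_{k+1} = \bar a_{k+2}$ (from the symmetry \eqref{eq:symm_in_ais}), which lets the estimate extend one further step to $i = \lfloor n/2 \rfloor + 1$. I would therefore carefully track the index ranges in \eqref{eq:pdi} and \eqref{eq:symm_in_ais} throughout.

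The step I expect to be the main obstacle is justifying the passage from the recursion to the clean multiplicative inequality $\bar a_{i+1} > \bar a_i^2/\bar q$: this relies on $\bar a_{i-1} < \bar a_{i+1}$, which is precisely the content of \eqref{eq:pdi}, so I need to make sure that lemma's hypotheses (it was derived from Lemma~\ref{lemma:mono_ai_s} plus $\bar a_0 = 0$, $\bar a_2 = \bar r > 1$) cover every index I invoke, including the extra odd-$n$ index. Everything else is elementary algebra with exponents of $\bar q$, using only $1 < \bar q < 2$ from \eqref{eq:ippet_bounq} for the base case. If the range in \eqref{eq:pdi} turned out to be too short by one, the fallback would be to use $\bar a_{i-1} \le \bar q$ directly (from the convergence-to-$\bar q$ picture, but that is not yet rigorously available at this point in the paper), so the argument really does hinge on \eqref{eq:pdi}.
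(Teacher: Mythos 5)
Your inductive scheme hits a genuine obstruction, and the arithmetic in the final display has two sign-level slips that mask it. The inequality you extract from $\bar a_{i+1}+\bar a_{i-1}=\bar r\bar a_i^2$ together with $\bar a_{i-1}<\bar a_{i+1}$ is
\[
2\bar a_{i+1} > \bar r\bar a_i^2, \qquad\text{i.e.}\qquad \bar a_{i+1} > \frac{\bar a_i^2}{\bar q},
\]
and then you plug in $\bar a_i>\bar q^{(2^{i-1}-1)/2^{i-1}}$. The correct exponent after dividing by $\bar q$ is
\[
2\cdot\frac{2^{i-1}-1}{2^{i-1}}-1 \;=\; 1-\frac{2}{2^{i-1}} \;=\; 1-\frac{1}{2^{i-2}},
\]
not $1-\tfrac{1}{2^{i-1}}$ as you wrote, and in any case $1-\tfrac{1}{2^{i-1}}$ equals $\tfrac{2^{i-1}-1}{2^{i-1}}$, not the target $\tfrac{2^{i}-1}{2^{i}}$. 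Since $\bar q>1$, your corrected exponent $1-2^{-(i-2)}$ is \emph{strictly smaller} than the target $1-2^{-i}$, so the bound you derive is strictly weaker than what you need to feed into the next inductive step. Quantitatively, if you write $\bar a_i = \bar q(1-\varepsilon_i)$, the map $\bar a_{i+1}>\bar a_i^2/\bar q$ only gives $\varepsilon_{i+1}<2\varepsilon_i$, so the deficit doubles each step: this inequality is an \emph{expansion} near the fixed point $\bar q$, not a contraction, and the induction collapses already at $i=2$ (where it gives only $\bar a_3>1$ instead of $\bar a_3>\bar q^{3/4}$).

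The paper uses $\bar a_{i-1}<\bar a_{i+1}$ on the \emph{other} term of the sum. Dividing the recursion $\bar a_{i-1}+\bar a_{i+1}=\bar r\bar a_i^2$ by $\bar a_{i-1}$ gives $1+\bar a_{i+1}/\bar a_{i-1}=\bar r\bar a_i^2/\bar a_{i-1}$, so $\bar a_{i+1}>\bar a_{i-1}$ forces the right-hand side to exceed $2$, i.e.
\[
\bar q\,\bar a_{i-1} < \bar a_i^2
\]
(this is the paper's Eq.~\eqref{eq:add-bou}). Re-indexing, the inductive step becomes $\bar a_{i+1}>\bigl(\bar q\,\bar a_i\bigr)^{1/2}$, a geometric-mean contraction: in the $\varepsilon$-picture this yields $\varepsilon_{i+1}<\varepsilon_i/2$, and plugging in the inductive hypothesis gives $\bar a_{i+1}>\bar q^{(1+(2^{i-1}-1)/2^{i-1})/2}=\bar q^{(2^i-1)/2^i}$ exactly. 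So the fix is not a different lemma but a different \emph{use} of the same inequality $\bar a_{i-1}<\bar a_{i+1}$: bound $\bar a_{i-1}$ from above (paper), rather than $\bar a_{i+1}$ from below (yours). Your index bookkeeping, base case, and the treatment of the odd-$n$ extension via $\bar a_{j+1}=\bar a_{j-1}$ (which upgrades the strict inequality to $\bar q\,\bar a_{j-1}=\bar a_j^2$ at the symmetry point) are all consistent with the paper once this substitution is made.
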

\begin{proof}
Dividing both sides of~\eqref{eq:rec_ai}
by~$\bar a_{i-1}$ 
gives
\be\label{eq:rec_ai+mod}
1+\frac{\bar a_{i+1}}{\bar a_{i-1}}=\frac{ \bar r\bar a_i^2}{\bar a_{i-1}}, \quad i=2,\dots,n+2.
\ee
combining this  with~\eqref{eq:pdi} 
gives 
\be\label{eq:add-bou}
\bar 
q \bar a_{i-1}<\bar a_i^2 , \quad i=2,\dots,\lfloor n/2\rfloor+1. 
\ee

We can now prove~\eqref{eq:airato}
by induction.  For~$i=1$, \eqref{eq:airato}
becomes~$  \bar r>\left(\frac{2}{ \bar r}\right)^{1/2} $,  and this indeed holds  due to~\eqref{lower_bound_r}.
Suppose that~\eqref{eq:airato}
holds with~$i$ being replaced by~$i-1$ for 
some~$1\leq i-1 < \lfloor n/2 \rfloor$.
Then
\begin{align}\label{eq:indcys}
    (\bar a_{i+1})^2 &\geq  \bar  q \bar a_i\nonumber\\
    &>  \bar q  (\bar  q)^{\frac{2^{i-1}-1}{2^{i-1}}}\nonumber\\
    &= (\bar  q )^{\frac{2^{i}-1}{2^{i-1}}},
\end{align}
so~$\bar a_{i+1}> (\bar q) ^{\frac{2^{i}-1}{2^{i}}}$,
and this completes the inductive proof of~\eqref{eq:airato}.

Now assume that~$n$ is odd, that is~$n=2k+1$. Let~$j:= \lfloor n/2 \rfloor  +2=k+2$. Then~\eqref{eq:symm_in_ais} gives~$\bar a_{j+1}=\bar a_{j-1}$, so~\eqref{eq:rec_ai+mod} gives
\be\label{eqT:add-bou}
\bar q \bar a_{j-1}=\bar a_j^2.
\ee
This implies that we can apply the inductive argument in~\eqref{eq:indcys} for this index as well, and this completes the proof. 
\end{proof}

If follows from~\eqref{eq:ippet_bounq}  that~$0=\bar a_0=\bar a_{n+3}<\bar q$ and that~$1=\bar a_1=\bar a_{n+2}<\bar q$.
The next result shows that in fact
all the~$\bar a_i$s are upper bounded by~$\bar q$. 
\begin{Lemma}\label{lemma:climbtoq}
       Fix~$n\in\mathbb N$.

       Then 
\be\label{eq:upeq}
   \bar a_{i}<\bar q, \quad i=0,\dots, n+3.
   \ee
\end{Lemma}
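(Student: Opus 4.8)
The plan is to prove the uniform upper bound $\bar a_i < \bar q$ by a symmetric induction that works simultaneously from both ends of the orbit, using the recursion~\eqref{eq:rec_ai} together with the monotonicity in~\eqref{eq:pdi} and the lower bounds on the $\bar a_i$s from Lemma~\ref{lemma:growth_a_is}. By the symmetry~\eqref{eq:symm_in_ais} it suffices to bound $\bar a_i$ for $i = 0, 1, \dots, \lceil (n+3)/2 \rceil$; the remaining indices follow by reflection. We already know $\bar a_0 = 0 < \bar q$, $\bar a_1 = 1 < \bar q$ (using~\eqref{eq:ippet_bounq}), and $\bar a_2 = \bar r = 2/\bar q$. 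Since $\bar q > 2^{1/3}$ we have $\bar r = 2/\bar q < 2^{2/3} < \bar q$ only when $\bar q > \sqrt{2}$, which need not hold for small $n$; so the base cases near $i=2$ will need a slightly more careful argument, likely splitting on the size of $\bar q$ or invoking the explicit values of $\bar a_2, \bar a_3, \bar a_4$ from~\eqref{eq:iter_r} together with the bound $\bar r < 2^{2/3}$ from~\eqref{eq:uppr_r}.

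The core inductive step is as follows. Suppose $\bar a_{i-1} < \bar q$ and $\bar a_i < \bar q$ for some $i$ in the range $1 \le i \le \lfloor n/2 \rfloor$ (or $\lfloor n/2\rfloor + 1$, exploiting~\eqref{eqT:add-bou} in the odd case as in Lemma~\ref{lemma:growth_a_is}). From~\eqref{eq:rec_ai} we have $\bar a_{i+1} = \bar r \bar a_i^2 - \bar a_{i-1}$. I want to show this is $< \bar q$. Using $\bar r = 2/\bar q$, this amounts to showing $\frac{2}{\bar q}\bar a_i^2 - \bar a_{i-1} < \bar q$, i.e.\ $2\bar a_i^2 < \bar q(\bar q + \bar a_{i-1}) = \bar q^2 + \bar q \bar a_{i-1}$. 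Since $\bar a_i < \bar q$ gives $2\bar a_i^2 < 2\bar q^2$, a crude bound is not enough; I will instead use the identity $\bar a_{i+1} + \bar a_{i-1} = \bar r \bar a_i^2$ more cleverly. The key observation is that the fixed point $\bar q$ satisfies $2\bar q = \bar r \bar q^2$, so subtracting, $(\bar a_{i+1} - \bar q) + (\bar a_{i-1} - \bar q) = \bar r(\bar a_i^2 - \bar q^2) = \bar r(\bar a_i - \bar q)(\bar a_i + \bar q)$. Writing $\delta_j := \bar q - \bar a_j \ge 0$, this reads $\delta_{i+1} = \delta_i \cdot \bar r(\bar a_i + \bar q) - \delta_{i-1} = \delta_i(\bar r \bar a_i + 2) - \delta_{i-1}$. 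Hence $\delta_{i+1} \ge 0$ provided $\delta_i(\bar r \bar a_i + 2) \ge \delta_{i-1}$, and since $\bar r \bar a_i + 2 \ge 2$ it suffices to have $2\delta_i \ge \delta_{i-1}$, i.e.\ the positivity gaps do not shrink by more than a factor of $2$ per step. This last inequality is exactly what Lemma~\ref{lemma:growth_a_is} is designed to give: the lower bound $\bar a_{i+1} > \bar q^{(2^i - 1)/2^i}$ translates to $\delta_{i+1} < \bar q(1 - \bar q^{-1/2^i}) < \bar q \cdot 2^{-i}\ln\bar q$ (using $1 - x^{-t} < t \ln x$ for $t \in (0,1)$, $x>1$), which decays geometrically at rate $1/2$, giving precisely $2\delta_{i+1} \ge \delta_i$ and closing the induction.

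I expect the main obstacle to be handling the base of the induction and the turnaround point cleanly. Near $i = 0,1,2$ the sequence $\delta_i$ is not yet governed by the geometric decay (indeed $\delta_0 = \bar q > 0$, $\delta_1 = \bar q - 1$, $\delta_2 = \bar q - 2/\bar q$ which is negative when $\bar q < \sqrt 2$!), so one must verify $\bar a_2 < \bar q$ directly: since $\bar q > 2^{1/3}$, we need $\bar r = 2/\bar q$ versus $\bar q$, and $\bar r < \bar q \iff \bar q > \sqrt 2 \iff \bar r < \sqrt 2$; combined with $\bar r < 2^{2/3} \approx 1.587$ and the tighter bound $\bar r(n) < (2\cos(\pi/(n+3)))^{2/3}$ which tends to $2^{2/3}$, there may be a genuine small-$n$ range where $\bar a_2 > \bar q$ and the clean statement $\bar a_i < \bar q$ fails at $i=2$ — so I'd double-check whether the Lemma perhaps tacitly uses $n$ large, or whether the real content is that $\bar a_2 = \bar r < \bar q$ does hold because $\bar r^2 < 2$ always (equivalently $\bar r < \sqrt 2$), which would follow if one can show $\bar\sigma^2 \bar\lambda_0 < 2\sqrt 2$; establishing that auxiliary bound, or otherwise pinning down exactly why $\bar a_2 < \bar q$, is the delicate part. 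The other potential snag is the odd/even bookkeeping at the midpoint, but that is handled exactly as in Lemma~\ref{lemma:growth_a_is} via~\eqref{eqT:add-bou}, so it should be routine once the generic step is in place.
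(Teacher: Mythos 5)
Your proposal does not establish the lemma, and the difficulties you flag near the end are genuine and fatal rather than merely ``delicate.'' There are two independent gaps.

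First, the inductive step runs in the wrong direction. You correctly derive $\delta_{i+1} + \delta_{i-1} = \delta_i(\bar r\bar a_i + 2)$ and observe that $\delta_{i+1}\ge 0$ would follow from $2\delta_i\ge\delta_{i-1}$. But to get $2\delta_i\ge\delta_{i-1}$ you need a \emph{lower} bound on $\delta_i$ relative to $\delta_{i-1}$, whereas Lemma~\ref{lemma:growth_a_is} provides only \emph{upper} bounds on the $\delta_j$s (it lower-bounds the $\bar a_j$s). The step ``$\delta_{i+1} < \bar q\cdot 2^{-i}\ln\bar q$, which decays geometrically at rate $1/2$, giving precisely $2\delta_{i+1}\ge\delta_i$'' is a non-sequitur: two upper bounds of the form $\delta_{i+1}<c/2^i$ and $\delta_i<c/2^{i-1}$ do not imply any relation between $\delta_{i+1}$ and $\delta_i$ themselves. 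Second, the base case $\bar a_2 = \bar r < \bar q$ is equivalent to $\bar r < \sqrt 2$, but the only a priori upper bound available at this point is $\bar r < 2^{2/3}>\sqrt 2$, so your induction cannot even start. In the paper the inequality $\bar r<\sqrt 2$ is deduced as a \emph{consequence} of this very lemma (see the remark following it), so it cannot be assumed.

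The paper sidesteps both problems with a contradiction argument that requires no base-case verification and no decay rate. Suppose some $\bar a_i\ge\bar q$; since $\bar a_0=0$, $\bar a_1=1<\bar q$, let $\ell\ge 2$ be the first such index, so $\bar a_{\ell-1}<\bar q\le\bar a_\ell$. Then $\bar a_{\ell+1}=\bar r\bar a_\ell^2-\bar a_{\ell-1}\ge\bar r\bar q\bar a_\ell-\bar a_{\ell-1}=2\bar a_\ell-\bar a_{\ell-1}>\bar a_\ell\ge\bar q$, using only $\bar r\bar q=2$ and $\bar a_{\ell-1}<\bar a_\ell$. This reproduces the hypotheses at $\ell+1$ (namely $\bar a_{\ell+1}\ge\bar q$ and $\bar a_\ell<\bar a_{\ell+1}$), so by iteration $\bar a_{n+3}\ge\bar q>0$, contradicting $\bar a_{n+3}=0$. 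I would rework your proof along these lines; the $\delta$-recursion you wrote down is a nice identity, but it is not by itself enough to close the argument.
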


\begin{proof} 
Assume the claim is false. As~$\bar a_0=0$ and~$\bar a_1=1$   then there exists an index~$\ell \in\{ 2,\dots,\lfloor n/2 \rfloor+1\}$ such that 
\be\label{eq:ohid}
\bar a_\ell \geq\bar q \text{ and } \bar a_{\ell-1} <\bar q.  
\ee
The following argument only uses~$\bar a_\ell \geq\bar q$ and~$\bar a_{\ell-1} <\bar a_{\ell}$: 
\begin{align*}
\bar a_{\ell+1} &= \bar r \bar a_\ell^2-\bar a_{\ell-1}\\
&\geq \bar r \bar q \bar a_\ell-\bar a_{\ell-1} \\
&=2\bar a_\ell-\bar a_{\ell-1}\\
& > \bar a_\ell\\
&\geq \bar q.
 \end{align*}
Thus, we have~$a_{\ell+1} \geq\bar q$ and~$\bar a_{\ell} < \bar a_{\ell+1}$ which allows to repeat the argument with~$\ell$ being replaced by~$\ell+1$.
Continuing in this way, we  arrive at~$\bar a_{n+3}\geq \bar q$ and this is a contradiction, as~$\bar a_{n+3}=0$.
\end{proof}

Observe that we can combine~\eqref{eq:add-bou} and~\eqref{eqT:add-bou} in the proof of Lemma~\ref{lemma:growth_a_is} to get 
\[
q \bar a_{i-1} \leq \bar a_i^2 , \quad i=2,\dots,\lfloor (n+3)/2\rfloor. 
\]
Using in addition Lemma~\ref{lemma:climbtoq} we learn that 
\[
\bar q\bar a_{i-1} \leq \bar a_i^2<\bar q\bar a_i, \quad i=2,\dots,\lfloor (n+3)/2\rfloor,
\]
leading to an improvement of Lemma~\ref{lemma:mono_ai_s}.

\begin{Remark}
    It follows from 
    the above that
    \[
    \bar a_0<\bar a_1<\dots<\bar a_{\lfloor (n+3)/2 \rfloor} < \bar q.
    \]
    Also, the fact that~$\bar a_2<\bar q$ yields~$\bar r<2/\bar r$, so~$\bar r<\sqrt{2}$, and~$\bar q=2/\bar r >\sqrt{2}$.  
\end{Remark}
 Using the results above we can derive    bounds on the optimal~$\bar \lambda_i$s. We begin with an upper bound. 
 \begin{Lemma}\label{lemma:ipneu}
 We have
 \[
     \bar \lambda_i  <\frac{4}{\bar \sigma^2}  ,\quad i=0,1,\dots, n . 
 \]
 \end{Lemma}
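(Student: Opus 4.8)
The plan is to read off the bound directly from the representation~\eqref{eq:lamb_as_ais} of the optimal rates in terms of the normalized sequence~$\bar a$, combined with the uniform ceiling $\bar a_i < \bar q$ supplied by Lemma~\ref{lemma:climbtoq}. In other words, all the hard work has already been done in establishing that every entry of the orbit $(\bar a_i)$ stays strictly below the value $\bar q$ of the hyperbolic fixed point; the claimed upper bound on $\bar\lambda_i$ is then just a rescaling of that fact.

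Concretely, I would first record the algebraic identity linking the prefactor in~\eqref{eq:lamb_as_ais} to $\bar q$. From~\eqref{eq:lamb_as_ais} we have $\bar\lambda_i = (\bar\lambda_0/\bar\sigma)^{2/3}\,\bar a_{i+1}\bar a_{i+2}$, and from~\eqref{eq:simpt} and~\eqref{eq:def_qn} we have $\bar q = 2/\bar r$ with $\bar r = \bar\sigma^{2/3}\bar\lambda_0^{1/3}$. Hence
\[
\left(\frac{\bar\lambda_0}{\bar\sigma}\right)^{2/3}\bar q^2
= \frac{\bar\lambda_0^{2/3}}{\bar\sigma^{2/3}}\cdot\frac{4}{\bar r^2}
= \frac{\bar\lambda_0^{2/3}}{\bar\sigma^{2/3}}\cdot\frac{4}{\bar\sigma^{4/3}\bar\lambda_0^{2/3}}
= \frac{4}{\bar\sigma^2}.
\]

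Then, fix $i\in\{0,1,\dots,n\}$. The two indices $i+1$ and $i+2$ lie in $\{1,\dots,n+2\}\subset\{0,\dots,n+3\}$, so Lemma~\ref{lemma:climbtoq} gives $\bar a_{i+1}<\bar q$ and $\bar a_{i+2}<\bar q$. Since all quantities involved are positive, we conclude
\[
\bar\lambda_i=\left(\frac{\bar\lambda_0}{\bar\sigma}\right)^{2/3}\bar a_{i+1}\bar a_{i+2}
<\left(\frac{\bar\lambda_0}{\bar\sigma}\right)^{2/3}\bar q^2
=\frac{4}{\bar\sigma^2},
\]
which is the assertion. There is no genuine obstacle here: the only thing to be slightly careful about is that the index shift keeps $i+1,i+2$ inside the range covered by Lemma~\ref{lemma:climbtoq}, and that the prefactor bookkeeping with the fractional exponents is done correctly. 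The substance of the argument sits entirely in Lemma~\ref{lemma:climbtoq}, which this proof merely translates back through~\eqref{eq:lamb_as_ais}.
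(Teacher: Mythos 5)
Your proof is correct and is essentially identical to the paper's: both apply the identity~\eqref{eq:lamb_as_ais}, bound $\bar a_{i+1}\bar a_{i+2} < \bar q^2$ via Lemma~\ref{lemma:climbtoq} (equivalently~\eqref{eq:upeq}), and simplify using~\eqref{eq:def_qn}. You merely spell out the algebra linking $(\bar\lambda_0/\bar\sigma)^{2/3}\bar q^2$ to $4/\bar\sigma^2$ more explicitly than the paper does.
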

    \begin{proof}
        Combining~\eqref{eq:lamb_as_ais} and~\eqref{eq:upeq} gives
     $          \bar\lambda_i < ( \bar \lambda_0/
\bar\sigma     )^{2/3} \bar q^2
        $,  and using~\eqref{eq:def_qn} completes the proof. 
    \end{proof}

The next result provides  a lower bound on the~$\bar \lambda_i$s. 
 \begin{Lemma}\label{lemm:ofgth}
     We have
     \begin{align}
         \bar \lambda_i >\frac{4}{\bar \sigma^2}(\bar q)^{\frac{-3}{2^{i+1}} },\quad i=0,1,\dots,\lfloor n/2\rfloor-1. \label{eq:piceww}
     \end{align}
     Furthermore, 
  if~$n$ is even then 
  \be\label{eq:niseveno}
\bar \lambda_{\lfloor n/2\rfloor}> \frac{4}{\bar\sigma^2}(\bar q)^\frac{-4}{2^{ ( n/2) +1}} ,
\ee
and if~$n>1$ is odd then 
\[
\bar \lambda_{\lfloor n/2\rfloor }> \frac{4}{\bar\sigma^2}(\bar q)^\frac{-3}{2^{ \lfloor n/2\rfloor+1}}. 
\]
 \end{Lemma}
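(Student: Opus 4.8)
The plan is to express $\bar\lambda_i$ entirely through the sequence $\bar a$ and the single parameter $\bar q$, and then feed in the growth estimate of Lemma~\ref{lemma:growth_a_is}. Concretely, starting from \eqref{eq:lamb_as_ais} I would multiply and divide by $\bar q^2$ and use the identity $(\bar\lambda_0/\bar\sigma)^{2/3}\bar q^2 = 4/\bar\sigma^2$ --- the same computation from \eqref{eq:def_qn} already used in the proof of Lemma~\ref{lemma:ipneu} --- to obtain $\bar\lambda_i = \frac{4}{\bar\sigma^2}\,\frac{\bar a_{i+1}\bar a_{i+2}}{\bar q^2}$, valid for all $i = 0,\dots,n$. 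From here every lower bound on $\bar\lambda_i$ is just a lower bound on the product $\bar a_{i+1}\bar a_{i+2}$.

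For $i \in \{0,1,\dots,\lfloor n/2\rfloor - 1\}$ I would substitute $\bar a_{i+1} \geq \bar q^{(2^i-1)/2^i}$ and $\bar a_{i+2} > \bar q^{(2^{i+1}-1)/2^{i+1}}$ into this identity. The first bound is \eqref{eq:airato} for $i \geq 1$ and holds with equality at $i = 0$ (since $\bar a_1 = 1 = \bar q^0$); the second is \eqref{eq:airato} with index $i+1$, which lies in the range $\{1,\dots,\lfloor n/2\rfloor\}$ precisely because $i \leq \lfloor n/2\rfloor - 1$. Since $\bar q > 1$, multiplying these and collecting exponents gives $\bar a_{i+1}\bar a_{i+2} > \bar q^{(1-2^{-i})+(1-2^{-(i+1)})} = \bar q^{2 - 3/2^{i+1}}$, and dividing by $\bar q^2$ yields \eqref{eq:piceww}; the inequality is strict because the bound on $\bar a_{i+2}$ is strict.

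For the endpoint I would split on the parity of $n$. If $n = 2k$ is even, the symmetry relation \eqref{eq:symm_in_ais} (with $n+3 = 2k+3$) gives $\bar a_{k+1} = \bar a_{k+2}$, so $\bar\lambda_{n/2} = \frac{4}{\bar\sigma^2}\bar a_{k+1}^2/\bar q^2$; applying \eqref{eq:airato} with $i = k = n/2$ (in range) and squaring produces the exponent $2(1-2^{-k}) - 2 = -4/2^{k+1}$, which is \eqref{eq:niseveno}. If $n = 2k+1$ is odd, then $\bar\lambda_{\lfloor n/2\rfloor} = \bar\lambda_{\lfloor n/2\rfloor + 1}$ by \eqref{eq:symm}, and $\bar\lambda_k = \frac{4}{\bar\sigma^2}\bar a_{k+1}\bar a_{k+2}/\bar q^2$; here I would use that Lemma~\ref{lemma:growth_a_is} explicitly extends \eqref{eq:airato} to the extra index $i = \lfloor n/2\rfloor + 1 = k+1$, so together with the $i = k$ case I again get $\bar a_{k+1}\bar a_{k+2} > \bar q^{2-3/2^{k+1}}$ and hence the claimed bound.

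I do not expect a genuine obstacle; the argument is essentially algebraic, being a short consequence of the already-established identity \eqref{eq:lamb_as_ais} and Lemma~\ref{lemma:growth_a_is}. The main subtlety is the index bookkeeping: one must check that each use of \eqref{eq:airato} keeps its index inside $\{1,\dots,\lfloor n/2\rfloor\}$, treat $i = 0$ separately because $\bar a_1 = 1$ sits exactly where \eqref{eq:airato} degenerates to an equality, and, in the odd endpoint case, remember that the final sentence of Lemma~\ref{lemma:growth_a_is} is what licenses the extra index $\lfloor n/2\rfloor + 1$.
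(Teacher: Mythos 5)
Your proof is correct and takes essentially the same route as the paper's: rewrite $\bar\lambda_i$ via \eqref{eq:lamb_as_ais} and \eqref{eq:def_qn} as $\frac{4}{\bar\sigma^2}\bar a_{i+1}\bar a_{i+2}/\bar q^2$, then apply Lemma~\ref{lemma:growth_a_is} to the two factors, using symmetry \eqref{eq:symm_in_ais} for the even midpoint and the last clause of Lemma~\ref{lemma:growth_a_is} for the odd midpoint. The only difference is that you absorb the $i=0$ case into the general formula (using $\bar a_1=1=\bar q^0$), whereas the paper handles $i=0$ by a direct computation $\bar\lambda_0 = \tfrac{4}{\bar\sigma^2}\bar q^{-3/2}\cdot\tfrac{2}{\bar q^{3/2}}$ with $2/\bar q^{3/2}>1$; both ultimately rest on the same inequality $\bar q < 2^{2/3}$ from \eqref{eq:ippet_bounq}, so this is a cosmetic rather than substantive distinction.
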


\begin{proof}
We consider three cases. 

\noindent  Case 1. Suppose that~$i=0$. Eq.~\eqref{eq:def_qn}
gives~$\bar\lambda_0=\frac{8}{\bar\sigma^2 (\bar q)^3}$. Write this as 
\[
\bar \lambda_0=\frac{4}{\bar \sigma^2} (\bar q)^{-3/2} \cdot\frac{2}{(\bar q)^{3/2}}.
\]
By~\eqref{eq:ippet_bounq}, 
 $\frac{2}{(\bar q)^{3/2}}>1$ for $n>1$, so~$\bar \lambda_0>\frac{4}{\bar \sigma^2} (\bar q)^{-3/2}$. 

\noindent  Case 2. Suppose that~$i\in\{1,\dots,\lfloor n/2\rfloor-1\}$. Then the bound  follows from combining~\eqref{eq:lamb_as_ais},   
Lemma~\ref{lemma:growth_a_is}, and~\eqref{eq:def_qn}.

\noindent  Case 3. Suppose that~$i=\lfloor n/2\rfloor$.
If~$n$ is even then~\eqref{eq:symm_in_ais} gives~$\bar a_{i+1}=\bar a_{i+2}$, so~\eqref{eq:lamb_as_ais} 
gives
\begin{align*}
\bar \lambda_i
&=( \bar \lambda_0/
\bar\sigma    
)^{2/3}  
\bar a_{i+1} ^2  ,   
\end{align*}
and using the bound in Lemma~\ref{lemma:growth_a_is}
proves~\eqref{eq:niseveno}. If $n>1$ is odd then the proof follows from the last
statement in Lemma~\ref{lemma:growth_a_is}.
\end{proof}

 We can now prove the main result. 
Fix~$i\in\{0,1,\dots,\lfloor n/2\rfloor-1\}$. Then~\eqref{eq:piceww} gives 
\begin{align*}
         \bar \lambda_i >\frac{4}{\bar \sigma^2}e^{ \frac{-3\ln(\bar q)}{2^{i+1}} } 
\end{align*}
Using the bound~$e^{-x}>1-x$ for any~$x>0$ yields
\be\label{eq:lambq}
\bar \lambda_i >\frac{4}{\bar\sigma^2} \left ( 1-  \frac{3\ln(\bar q)}{2^{i+1}} \right ) .
\ee
By~\eqref{eq:ippet_bounq}, we have~$\ln(\bar q)<\frac{2}{3}\ln(2)$, and this proves the inequality on the left-hand side of~\eqref{eq:lamppp}. 
     The second inequality follows  from Lemma~\ref{lemma:ipneu}.
Eqs.~\eqref{eq:lameven}
and~\eqref{eq:lampoddp} follow similarly from Lemma~\ref{lemm:ofgth}.

We now prove~\eqref{eq:sigmbo}. Assume first that~$n>1$ is odd. Then
\begin{align*}
    n+1&=\sum_{i=0}^n \bar\lambda_i\\
    &=2\sum_{i=0}^{(n-1)/2} \bar\lambda_i\\
    &>2 \frac{4}{\bar\sigma^2} \sum_{i=0}^{(n-1)/2}
    \left( 1-\frac{3\ln(\bar q)  }{2^{i+1}}  \right )\\
    &>\frac{4}{\bar\sigma^2} (n+1-6\ln(\bar q)), 
\end{align*}
where we used~\eqref{eq:lambq}. Now assume that~$n$ is even. Then
\begin{align*}
    n+1&=\sum_{i=0}^n \bar\lambda_i\\
    &=2\sum_{i=0}^{(n/2)-1} \bar\lambda_i+\bar\lambda_{n/2} \\
    &> \frac{4}{\bar\sigma^2}
    \left( 2
    \sum_{i=0}^{(n/2)-1}
    \left( 1-\frac{3\ln(\bar q)  }{2^{i+1}}  \right ) + (1-\frac{4\ln(\bar q)}{2^{(n/2)+1}} ) \right) \\
    &= \frac{4}{\bar\sigma^2}
    \left(   n+{1}-6\ln(\bar q) (1-\frac{1}{2^{n/2}})-2\ln(\bar q)\frac1{2^{n/2}}  \right) 
    \\
 &>   \frac{4}{\bar\sigma^2} (n+1-6\ln(\bar q)).
\end{align*}
Thus, in both cases we have
\be\label{eq:boundfrac}
\frac{\bar\sigma^2}{4}>1-\frac{6\ln(\bar q)}{n+1},
\ee
and using~\eqref{eq:ippet_bounq}  proves the left-hand side inequality in~\eqref{eq:sigmbo}. The inequality on the right-hand side of~\eqref{eq:sigmbo} follows from~\eqref{eq:tilde_sigma} and  Lemma~\ref{lem:allonesnonopt}.

To prove~\eqref{eq:36}, note that from the bounds we already proved, we have~$\frac{4}{\bar\sigma^2}-\bar\lambda_i <\frac{4}{\bar\sigma^2} \frac{4}{3} \frac{\ln(2)}{2^i}$. 
Using~\eqref{eq:boundfrac} gives
\begin{align*}
    \frac{4}{\bar\sigma^2} & < \frac{n+1}{n+1-6\ln(\bar q)}\\
    &< \frac{n+1}{n+1-4\ln(2)},
\end{align*}
so
\[
\frac{4}{\bar\sigma^2}-\bar\lambda_i <\frac{4}{3}\frac{n+1}{n+1-4\ln(2)} \frac{\ln(2)}{2^i}.
\]
Let~$g(n):=(4/3)\frac{n+1}{n+1-4\ln(2)} {\ln(2)}$.
As~$n\to\infty$,  we have~$g(n)\to (4/3)\ln(2)<1 $. In particular, $g(n)<1$ for any~$n\geq 36$. This proves~\eqref{eq:36}. 
This completes the proof of Thm.~\ref{thm:mainj}.

\section{Numerical examples}\label{secnumerics}
The following 
examples demonstrate   the  various 
quantities defined in the proofs above. 

\begin{example}\label{exa:hyper}
    Consider the case~$n=3$. Then
    \[
    B(\lambda)=\begin{bmatrix}
        0&\lambda_0^{-1/2}& 0 & 0 &0 \\
         \lambda_0^{-1/2}& 0 & \lambda_1^{-1/2} &0 &0\\
        0&\lambda_1^{-1/2}& 0 & \lambda_2^{-1/2} &0 \\
        0&0&\lambda_2^{-1/2}& 0 & \lambda_3^{-1/2}  \\
        0&0& 0 & \lambda_3^{-1/2} &0  
    \end{bmatrix}, 
    \]
    and the  problem is  minimizing~$\sigma(B(\lambda))$ subject to~$\sum_{i=0}^3 \lambda_i\leq 4$.
A numerical solution using Matlab gives
\[
\bar \lambda=\begin{bmatrix} 0.8284   & 1.1716   & 1.1716  &  0.8284
\end{bmatrix}^\top 
\]
(all numerical values are to four digit accuracy).
The corresponding Perron eigenvalue is~$\bar \sigma=\sigma(B(\bar \lambda))=1.7071$, and the normalized Perron eigenvector 
is
\[
\bar v=\begin{bmatrix}
    0.3218&
    0.5000&
    0.5412&
    0.5000&
    0.3218
\end{bmatrix}^\top.
\]
Thus, $\bar r=\bar\sigma ^{2/3}\bar\lambda_0^{1/3}=1.3415$.
    The values in~\eqref{eq:def_mu} are
    \begin{align*}
        \bar v_1 \bar v_2 \bar \omega_0^3= 0.2134,\\
         \bar v_2 \bar v_3 \bar \omega_1^3= 0.2134,\\
         \bar v_3 \bar v_4 \bar \omega_2^3= 0.2134,\\
         \bar v_4 \bar v_5 \bar \omega_3^3= 0.2134. 
    \end{align*}
Eq.~\eqref{eq:def_ai_s} gives 
\[
\bar a = 
\begin{bmatrix}
 1.0000
&    1.3415
&    1.4142
&    1.3415
&    1.0000
\end{bmatrix}^\top.
\]
The relevant fixed point of~$F_{\bar r}$ is
\[
 \begin{bmatrix}
   \bar q &\bar q  
\end{bmatrix}^\top= \begin{bmatrix}
    1.4909
&    1.4909
\end{bmatrix}^\top
.
\]

\end{example}

\begin{example}
    As another example, consider the case $n=20$. 
Then the sequence of~$\bar \lambda_i$s is
\[
\begin{matrix}
0.6453 &   0.9338  &  1.0216 &    1.0459 &   1.0525  &  
1.0542 &    1.0547 &   1.0548  &  1.0549 \\   1.0549 &   1.0549  &
1.0549 &    1.0549 &   1.0548  &  1.0547 & 
 1.0542 &   1.0525 &  
 1.0459 \\  1.0216  &  
 0.9338 &   0.6453
\end{matrix}
\]
The corresponding Perron eigenvalue is~$\bar \sigma=\sigma(B(\bar \lambda))=    1.9473
$, and the normalized Perron eigenvector~$\bar v$ is
\[
\begin{matrix}
    0.1240&
    0.1939&
    0.2158&
    0.2219&
    0.2235&
    0.2240&
    0.2241&
    0.2241&
    0.2241\\
    0.2241&
    0.2241&
    0.2241&
    0.2241&
    0.2241&
    0.2241&
    0.2241&
    0.2240&
    0.2235\\
    0.2219&
    0.2158&
    0.1939&
    0.1240
\end{matrix} 
\]
Thus, $ \bar  r=\bar\sigma ^{2/3}\bar\lambda_0^{1/3}=1.3476$. 
    The values in~\eqref{eq:def_mu} are all equal to
    \begin{align*}
        \bar v_1 \bar v_2 \bar \omega_0^3=     0.0464.
    \end{align*}
The vector~$\bar a$ in~\eqref{eq:def_ai_s} is 
\[
 \begin{matrix}
    1.0000&
    1.3476&
    1.4471&
    1.4742&
    1.4815&
    1.4834&
    1.4840&
    1.4841&
    1.4841&
    1.4842\\
    1.4842&
    1.4842&
    1.4842&
    1.4841&
    1.4841&
    1.4840&
    1.4834&
    1.4815&
    1.4742&
    1.4471\\
    1.3476&
    1.0000,
\end{matrix}
\]
and the relevant fixed point of~$F_{\bar r}$ is
\[
 \begin{bmatrix}
    \bar  q & \bar q  
\end{bmatrix}^\top= \begin{bmatrix}
     1.4842
&     1.4842
\end{bmatrix}^\top
.
\]
\end{example}

 \begin{example}
 We know from
 Theorem~\ref{thm:mainj} 
that for any~$n\geq 36$ we have
\be\label{eq:rotp}
0<2^i \left( \frac{4}{\bar\sigma ^2(n)}-\bar \lambda_i(n)\right) <1  \text{ for all } i=0,1,\dots,\lfloor n/2\rfloor.
\ee
We calculated numerically the values
\begin{align}\label{eq:maxM}
    M(n)&:=\max_{i=0,1,\dots,\lfloor n/2\rfloor} 2^i \left( \frac{4}{\bar\sigma ^2(n)}-\bar \lambda_i(n)\right),
\end{align}
for $n=2,\dots,35$. These values are plotted in Fig.~\ref{fig:mandm} as a function of~$n$. It may be seen that~$M(n)<1$ for all~$n\in\{2,\dots,35\}$ providing a numerical validation  that the estimate~\eqref{eq:36} in fact holds for any~$n>1$. 
 \end{example}

\begin{figure}[t]
\centering
\includegraphics[scale=0.6]{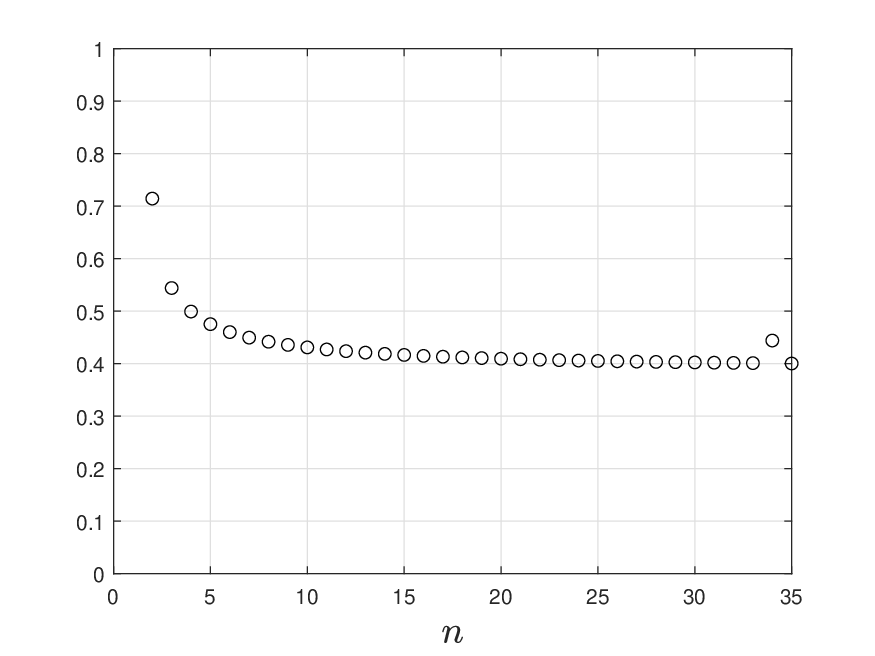}
\caption{The value~$M(n) $ in~\eqref{eq:maxM} as a function of~$n\in\{2,\dots,35\}$.}
\label{fig:mandm}
\end{figure}

 \section{Discussion}
We consider a nonlinear eigenvalue optimization problem that arises in an important nonlinear ODE model from systems biology called the ribosome flow model. The main result is  that the optimal solution satisfies a turnpike property. 
To the best of our knowledge, this is the first proof of such a structure in an eigenvalue optimization problem. 
Although we consider a specific problem, our analysis is based on studying the general recursion~\eqref{eq:rec_ai}, and it may be of interest to prove a turnpike structure in other recursions. 

There is a simple intuitive explanation for the turnpike property. To see  this, consider modifying the~$(n+2)\times(n+2)$ matrix~$B$
in~\eqref{eq:B_spec} to the $(n+2)\times(n+2)$
matrix
\be\label{eq:C_spec}
C:=\begin{bmatrix}
    0 & \lambda_0^{-1/2}& 0 &0 &0 &\dots& 0 &0&\lambda_{n+1}^{-1/2}\\
 \lambda_0^{-1/2}& 0 &\lambda_1^{-1/2} &0 &0&\dots& 0 &0&0\\
  0 &\lambda_1^{-1/2} &0 &\lambda_2^{-1/2}&0&\dots& 0 &0&0\\
  &&\vdots\\
   0 &0 &0 &0&0&\dots& \lambda_{n-1}^{-1/2}& 0& \lambda_{n}^{-1/2} \\
  \lambda_{n+1}^{-1/2} &0 &0 &0&0&\dots&0& \lambda_{n}^{-1/2} &0
\end{bmatrix}, 
\ee
that is, we added~$\lambda_{n+1}^{-1/2}$ at entries~$(1,n+2)$ and~$(n+2,1)$. Note that~$C$ has  a kind of cyclic structure. 
Pose the 
problem:~$\min\sigma(C)$ under the constraints~$\lambda_i\geq 0$ and~$\sum_{i=0}^{n+1}\lambda_i\leq n+2$. By symmetry,
the solution is~$\lambda_i=1$ for all~$i$. Thus, we may interpret the turnpike property for the problem with the matrix~$B$ as follows. The zero   entries at~$(1,n+2)$ and~$(n+2,1)$  in~$B$  lead to a transient behavior of the optimal~$\bar \lambda_i$s 
before and after the bulk. In the bulk, they  are close to~$1$ just like in the optimal solution with the matrix~$C$. This interpretation is in line with the turnpike property in some optimal control problems, where the initial and final condition
constraints enforce a transient behavior of the optimal control values before and after the turnpike. The problem of minimizing $\sigma(C)$ then plays the role of the static optimization problem used for determining the turnpike equilibrium in optimal control.

Further  
 topics for    research include the following. First, as noted above, turnpike properties in optimal control is an important and  widely studied topic. As noted in Ref.~\cite{TRELAT201581}: ``As it is well known, the turnpike property is actually due to a general hyperbolicity 
 phenomenon. Roughly speaking, in the neighborhood of a saddle point, any trajectory of a given
hyperbolic dynamical system, which is constrained to remain in this neighborhood in large
time, will spend most of the time near the saddle point. This very simple observation is at the
heart of the turnpike results.''  We have made this statement explicit in the context of our turnpike result in Remark~\ref{remT:Hartman}.
In this respect, it may be of interest to try and formulate the optimization problem studied here as an optimal control problem. One possible approach is to use Remark~\ref{remT:scaling} to  fix~$R>0$ and   re-write~\eqref{eq:eq_psati} and
  \eqref{eq:def_R_steady} as follows. 
 Find a positive control sequence~$u(0),\dots,u(n)$
 that steers the  discrete-time 
  control system
  \[
  x(k+1)=1-\frac{R}{u(k)x(k)}
  \]
 from~$x(0)=1$ to~$x(n+1)=0$
 while minimizing~$\sum_{i=0}^n u(i)$.

\bibliographystyle{IEEEtranS}
\bibliography{refs}
\end{document}